\definecolor{lime}{HTML}{A6CE39}
\DeclareRobustCommand{\orcidicon}{\hspace{-1mm}
	\begin{tikzpicture}
		\draw[lime, fill=lime](0,0) circle[radius=0.16] 
		node[white] {{\fontfamily{qag}\selectfont \tiny ID}};
		\draw[white, fill=white] (-0.0625,0.095) 
		circle [radius=0.007];
	\end{tikzpicture}
	\hspace{-1mm}}
\newtheorem{theorem}{Theorem}[section]
\newtheorem{definition}{Definition}[section]
\numberwithin{equation}{section}
\begin{document}
	\title{On $k$-Mersenne and $k$-Mersenne-Lucas Octonions}
	\author{ Munesh Kumari$^{1}$\orcidA{}\footnote{E-mail: muneshnasir94@gmail.com }, Kalika Prasad$^{1}$\orcidB{}\footnote{E-mail: klkaprsd@gmail.com}, and 
	Hrishikesh Mahato$^{1}$\orcidD{}\footnote{E-mail: hrishikesh.mahato@cuj.ac.in} 
	\\\normalsize{$^{1}$\small Department of Mathematics, Central University of Jharkhand, India, 835205}}
	\date{\today}
	\maketitle
	\noindent\rule{15cm}{.15pt}
	\begin{abstract}
		This paper aims to introduce the $k$-Mersenne and $k$-Mersenne-Lucas octonions.  We give the closed form formulae for these octonions and obtain some well-known identities like Cassini's identity, d'Ocagne's identity, Catalan identity, Vajda's identity and generating functions of them.
		As a consequence $k=1$ yields all the above properties for Mersenne and Mersenne-Lucas octonions.
	\end{abstract}
	\noindent\rule{15cm}{.15pt}
	\\\textit{\textbf{Keywords:} $k$-Mersenne Octonions, $k$-Mersenne-Lucas Octonions, Binet Formula, Catalan's Identity, Generating Function.}
	\\\textit{\textbf{Mathematics Subject Classifications:}11B39, 11B37, 11R52.}
	\section{Introduction}
	In 1843, W.R.Hamilton extended the concept of set of complex numbers to the set of quaternions denoted as $\mathbb{H}$. For $a,b,c,d \in \mathbb{R}$, a quaternion $q \in \mathbb{H}$ is of the form $q=a+bi+cj+dk$, where $i^2=j^2=k^2=ijk=-1$. 
	\\Inspired by Hamilton's work, J.T. Graves defined the concept of the octonions in 1843. Later, in 1845, A. Cayley also defined the octonions. The set of octonions is usually denoted by $\mathbb{O}$. With a natural basis $\{ e_0=1, e_1=i, e_2=j , e_3=k, e_4=e, e_5=ie, e_6=je, e_7=ke\}$, $\mathbb{O}$ forms an $8$-dimensional non-associative division algebra over $ \mathbb{R} $. 
	
	If an element $a \in \mathbb{O}$ then it takes the form, 
	\begin{eqnarray}\label{octonion}
		a=\sum_{r=0}^{7}a_re_r, \quad \text{where $a_r \in \mathbb{R}$.}
	\end{eqnarray}
	And, the conjugate of the octonion $a$ is given as
	\begin{eqnarray}\label{conjugate}
		\overline{a}=a_0-\sum_{r=1}^{7}a_re_r.
	\end{eqnarray}
	The norm of an octonion $a$ is given as 
	\begin{eqnarray}\label{octnorm}
		N(a)=\sqrt{a\bar{a}}=\sqrt{\bar{a}a}=\sqrt{\sum_{r=0}^{7}a_r^2}. 
	\end{eqnarray}	
	The basis $\{ e_0=1, e_1=i, e_2=j , e_3=k, e_4=e, e_5=ie, e_6=je, e_7=ke\}$ follows the multiplication rule\cite{tian2000matrix} given in the Table \ref{t1}.
	\begin{table}[h]
		\centering	
		\caption{The multiplication table for the basis of $\mathbb{O}$}
		\label{t1}
		\begin{tabular}{|c| c| c| c| c| c| c|c| c| c|}
			\hline 
			. & 1 & $ e_1 $  &  $ e_2 $ &  $ e_3 $ & $ e_4 $ & $ e_5 $ & $ e_6 $ & $ e_7 $  \\
			\hline
			1 & 1 & $ e_1 $  &  $ e_2 $ &  $ e_3 $ & $ e_4 $ & $ e_5 $ & $ e_6 $ & $ e_7 $  \\
			\hline
			$ e_1 $ & $ e_1 $ & $ -1$  &  $ e_3 $ &  $ -e_2 $ & $ e_5 $ & $ -e_4 $ & $ -e_7 $ & $ e_6 $  \\
			\hline
			$ e_2 $ & $ e_2 $ & $ -e_3 $  &  $ -1 $ &  $ e_1 $ & $ e_6 $ & $ e_7 $ & $ -e_4 $ & $ -e_5 $  \\
			\hline
			$ e_3 $ & $ e_3 $ & $ e_2 $  &  $ -e_1 $ &  $ -1 $ & $ e_7 $ & $ -e_6 $ & $ e_5 $ & $ -e_4 $  \\
			\hline
			$ e_4 $ & $ e_4 $ & $ -e_5 $  &  $ -e_6 $ &  $ -e_7 $ & $ -1 $ & $ e_1 $ & $ e_2 $ & $ e_3 $  \\
			\hline
			$ e_5 $ & $ e_5 $ & $ e_4 $  &  $ -e_7 $ &  $ e_6 $ & $ -e_1 $ & $ -1 $ & $ -e_3 $ & $ e_2 $  \\
			\hline
			$ e_6 $ & $ e_6 $ & $ e_7 $  &  $ e_4 $ &  $ -e_5 $ & $ -e_2 $ & $ e_3 $ & $ -1 $ & $ -e_1 $  \\
			\hline
			$ e_7 $ & $ e_7 $ & $ -e_6 $  &  $ e_5 $ &  $ e_4 $ & $ -e_3 $ & $ -e_2 $ & $ e_1 $ & $ -1 $  \\
			\hline	
		\end{tabular}
	\end{table}
	\\For more reading on the quaternions and octonions, reader referred to \cite{baez2002octonions,conway2003quaternions}.
	
	In recent years, recursive sequences are of great interest among the researchers. Study of recursive sequences in division algebra was firstly presented by Horadam\cite{horadam1963complex} where they introduced Fibonacci and Lucas quaternions. After that, many researchers extended this study to other recursive sequences like Pell, Pell-Lucas, Jacobsthal and Jacobsthal-Lucas, etc. (for example, see{\cite{ccimen2016pell,szynal2016note,tasci2017k,catarino2016modified}}).
	Octonions with Fibonacci and Lucas components were introduced by Akkus and Ke{\c{c}}ilioglu\cite{akkus2015split} and they studied their properties like Binet formula, generating function and some well-known identities. Recently A.D. Godse\cite{godase2019hyperbolic} studied the hyperbolic Octonions involving $k$-Fibonacci \& $k$-Lucas and {\"O}zkan et.al. \cite{ozkan2022jacobsthal} studied the hyperbolic Octonions with $k$-Jacobsthal \& $k$-Jacobsthal Lucas sequences.	
	Some recent work on octonions with the sequences like Pell, Pell-Lucas, Jacobsthal, Jacobsthal-Lucas, Mersenne, Horadam etc. can be seen in\cite{szynal2016pell,catarino2016modified,ccimen2017jacobsthal,KarataHalici,malini2021mer}.
	
	Motivated essentially by recent works on octonions with the components from a recursive sequence, here we are considering the generalized recursive sequences so-called the $k$-Mersenne sequence and the $k$-Mersenne-Lucas sequence, a generalization of the Mersenne sequence. Many papers are dedicated to Mersenne sequence and their generalizations (see, for example \cite{frontczak2020mersenne,Soykan_2021,chelgham2021k,kumari2021some}).
	Daşdemir and Göksal\cite{dasdemir2019gaussian} have defined Mersenne quaternions and obtained Binet's formula and generating function of them.
	\\The Mersenne sequence $\{M_{n}\}_{n\ge0}$ is defined\cite{catarino2016mersenne} by
	\begin{equation}\label{mersene}
		M_{0}=0, \quad M_{1}=1, \quad	M_{n+1} = 3M_{n} -2M_{n-1}, \quad n\ge1,\nonumber
	\end{equation}
	and the $k$-Mersenne sequence $\{M_{k,n}\}_{n\ge0}$ is defined\cite{uslu2017some} recursively by 
	\begin{equation}\label{k-mersene}
		M_{k,0}=0, \quad M_{k,1}=1,\quad M_{k,n+1} = 3kM_{k,n} -2M_{k,n-1}, \quad n\ge1.
	\end{equation}
	The Mersenne-Lucas sequence $\{m_{n}\}_{n\ge0}$is defined\cite{saba2021mersenne}  by 
	\begin{equation}\label{merlucas}
		 m_{0}=2, \quad m_{1}=3,\quad m_{n+1} = 3m_{n} -2m_{n-1}, \quad n\ge1,\nonumber
	\end{equation}
	and the $k$-Mersenne-Lucas sequence $\{m_{k,n}\}_{n\ge0}$ is defined recursively by 
	\begin{equation}\label{k-merslucas}
		m_{k,0}=2, \quad m_{k,1}=3k,\quad m_{k,n+1} = 3km_{k,n} -2m_{k,n-1}, \quad n\ge1. 
	\end{equation}
	The Binet formulae of the $k$-Mersenne and $k$-Mersenne-Lucas sequences are given, respectively, by
	\begin{equation}\label{Binet}
		M_{k,n}=\dfrac{\lambda_{1}^{n}-\lambda_{2}^{n}}{\lambda_{1}-\lambda_{2}}, \quad \text{and} \quad m_{k,n}=\lambda_{1}^{n}+\lambda_{2}^{n},
	\end{equation}
	where $\lambda_{1}=\dfrac{3k+\sqrt{9k^2-8}}{2}$ and $\lambda_{2}=\dfrac{3k-\sqrt{9k^2-8}}{2}$ are the roots of the characteristic equation $\lambda^2-3k\lambda+2=0$ associated with the above recurrence relations.
	
	Note that $ \lambda_{1} $ and $ \lambda_{2} $ have the following properties.
	\begin{eqnarray}
		\lambda_{1}+\lambda_{2}=3k, \quad \lambda_{1}\lambda_{2}=2,\quad \lambda_{1}-\lambda_{2}=\sqrt{9k^2-8}
	\end{eqnarray}
	and
	\begin{eqnarray}
		\dfrac{\lambda_{1}}{\lambda_{2}}=\dfrac{\lambda_{1}^2}{2},\quad \dfrac{\lambda_{2}}{\lambda_{1}}=\dfrac{\lambda_{2}^2}{2}.\nonumber
	\end{eqnarray}
\section{$k$-Mersenne Octonions}
	In this section, we define the $k$-Mersenne Octonions, obtain their closed form formula and present some well-known identities and properties of them.
	\begin{definition}\label{kmersenneoct}
		For $n\geq 0$, any $n^{th}$ $k$-Mersenne octonion $M\mathbb{O}_{k,n}$ is defined by the relation
		\begin{eqnarray}
			M\mathbb{O}_{k,n}=\sum_{r=0}^{7}M_{k,n+r}e_r, 
		\end{eqnarray}
		where $M_{k,n}$ is the $n^{th}$ $k$-Mersenne number.
	\end{definition}
	Using the Definition \ref{kmersenneoct} and equation \eqref{k-mersene}, after some elementary calculations, we get the following recurrence relation for the $k$-Mersenne octonions,
	\begin{eqnarray}\label{octrecursive}
		M\mathbb{O}_{k,n+1} = 3kM\mathbb{O}_{k,n} -2M\mathbb{O}_{k,n-1}. 
	\end{eqnarray}
	By the equation \eqref{conjugate}, the conjugate of the $k$-Mersenne octonion is 
	\begin{eqnarray}
		\overline{M\mathbb{O}}_{k,n}=M_{k,0}-\sum_{r=1}^{7}M_{k,n+r}e_r.
	\end{eqnarray}
	We should note that if we take $k=1$ in the expression \eqref{octrecursive}, we have the recursive formula for the $n^{th}$ Mersenne octonion $M\mathbb{O}_{n}$ given as
	\begin{eqnarray}\label{octMerrecursive}
		M\mathbb{O}_{n+1} = 3M\mathbb{O}_{n} -2M\mathbb{O}_{n-1}. \nonumber
	\end{eqnarray}
	\begin{theorem}
		For any integer $n\geq 0$, the norm of the $ n^{th} $ $k$-Mersenne octonion $M\mathbb{O}_{k,n}$ is 
		\begin{eqnarray}
			N(M\mathbb{O}_{k,n})=\sqrt{\dfrac{\lambda_{1}^{2n}(1+\lambda_{1}^2+...+\lambda_{1}^{14})+\lambda_{2}^{2n}(1+\lambda_{2}^2+...+\lambda_{2}^{14})-255.2^{n+1}}{9k^2-8}.}
		\end{eqnarray}
	\end{theorem}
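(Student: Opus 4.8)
The plan is to start from the norm definition \eqref{octnorm}, which for the $k$-Mersenne octonion gives $N(M\mathbb{O}_{k,n})^2 = M\mathbb{O}_{k,n}\,\overline{M\mathbb{O}}_{k,n} = \sum_{r=0}^{7} M_{k,n+r}^2$. So the entire problem reduces to finding a closed form for the sum of eight consecutive squared $k$-Mersenne numbers, and then taking a square root at the very end.

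Next I would substitute the Binet formula \eqref{Binet}, writing $M_{k,n+r}^2 = \dfrac{\lambda_1^{2(n+r)} - 2(\lambda_1\lambda_2)^{n+r} + \lambda_2^{2(n+r)}}{(\lambda_1-\lambda_2)^2}$. Using the relations $\lambda_1\lambda_2 = 2$ and $(\lambda_1-\lambda_2)^2 = 9k^2-8$ recorded just before the theorem, the mixed term collapses to $2^{n+r}$ and the denominator becomes the constant $9k^2-8$. Summing over $r=0,1,\dots,7$ and factoring $\lambda_1^{2n}$, $\lambda_2^{2n}$, and $2^{n}$ out of their respective sums yields
\[
(9k^2-8)\,N(M\mathbb{O}_{k,n})^2 = \lambda_1^{2n}\sum_{r=0}^{7}\lambda_1^{2r} + \lambda_2^{2n}\sum_{r=0}^{7}\lambda_2^{2r} - 2\cdot 2^{n}\sum_{r=0}^{7}2^{r}.
\]
Then I would evaluate only the purely numerical geometric sum $\sum_{r=0}^{7}2^{r} = 2^{8}-1 = 255$, so that the last term is exactly $255\cdot 2^{n+1}$, while leaving the first two sums in the polynomial form $1+\lambda_i^2+\cdots+\lambda_i^{14}$ exactly as they appear in the statement. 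Dividing by $9k^2-8$ and taking the positive square root gives the claimed expression; positivity of the radicand is automatic because it equals $\sum_{r=0}^{7}M_{k,n+r}^2\ge 0$.

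There is no genuine obstacle here: the proof is a direct computation, and the hardest part is merely the bookkeeping of which factors are $n$-independent when the sum is split over the three pieces (the $\lambda_1$-powers, the $\lambda_2$-powers, and the $2^{n+r}$ terms). The only points worth a remark are that $9k^2-8\neq 0$ for the admissible values of $k$, so the division is legitimate, and that one could, if desired, further close the sums as $\sum_{r=0}^{7}\lambda_i^{2r} = \dfrac{\lambda_i^{16}-1}{\lambda_i^{2}-1}$, though the stated form is already the intended final one.
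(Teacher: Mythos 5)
Your proposal is correct and follows exactly the same route as the paper: express $N(M\mathbb{O}_{k,n})^2=\sum_{r=0}^{7}M_{k,n+r}^2$, substitute the Binet formula, and simplify using $\lambda_1\lambda_2=2$ and $(\lambda_1-\lambda_2)^2=9k^2-8$. You merely spell out the cross-term computation $2\cdot 2^{n}\sum_{r=0}^{7}2^{r}=255\cdot 2^{n+1}$ that the paper leaves implicit.
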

	\begin{proof}
		By the eqn. \eqref{octnorm}, we have
		\begin{eqnarray}
			N(M\mathbb{O}_{k,n})^2 &=&\sum_{r=0}^{7}M_{k,n+r}^2\nonumber\\
			&=&\sum_{r=0}^{7}\left(\dfrac{\lambda_{1}^{n+r}-\lambda_{2}^{n+r}}{\lambda_{1}-\lambda_{2}}\right)^2\nonumber\\
			&=&\dfrac{\lambda_{1}^{2n}(1+\lambda_{1}^2+...+\lambda_{1}^{14})+\lambda_{2}^{2n}(1+\lambda_{2}^2+...+\lambda_{2}^{14})-255.2^{n+1}}{9k^2-8}.\nonumber
		\end{eqnarray} Thus, this completes the proof.
	\end{proof}
	\begin{theorem}
		The closed form formula of the $k$-Mersenne octonions is given as
		\begin{eqnarray}\label{binetoct}
			M\mathbb{O}_{k,n}=\dfrac{\alpha\lambda_{1}^{n}-\beta\lambda_{2}^n}{\sqrt{9k^2-8}},
		\end{eqnarray}
		where $\alpha=\sum_{r=0}^{7}\lambda_{1}^{r}e_r$ and $\beta=\sum_{r=0}^{7}\lambda_{2}^{r}e_r .$
	\end{theorem}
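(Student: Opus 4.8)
The plan is to obtain the closed form by substituting the scalar Binet formula \eqref{Binet} into the definition of the octonion and then separating the two geometric pieces. First I would expand, using Definition \ref{kmersenneoct} together with $M_{k,m}=(\lambda_{1}^{m}-\lambda_{2}^{m})/(\lambda_{1}-\lambda_{2})$,
\[
M\mathbb{O}_{k,n}=\sum_{r=0}^{7}M_{k,n+r}e_r=\sum_{r=0}^{7}\frac{\lambda_{1}^{n+r}-\lambda_{2}^{n+r}}{\lambda_{1}-\lambda_{2}}\,e_r .
\]
Since $\lambda_{1}^{n}$, $\lambda_{2}^{n}$ and $(\lambda_{1}-\lambda_{2})^{-1}$ are real scalars, they can be pulled out of the finite sum over the basis vectors $e_r$, which gives
\[
M\mathbb{O}_{k,n}=\frac{1}{\lambda_{1}-\lambda_{2}}\left(\lambda_{1}^{n}\sum_{r=0}^{7}\lambda_{1}^{r}e_r-\lambda_{2}^{n}\sum_{r=0}^{7}\lambda_{2}^{r}e_r\right)=\frac{\alpha\lambda_{1}^{n}-\beta\lambda_{2}^{n}}{\lambda_{1}-\lambda_{2}},
\]
and then I would substitute $\lambda_{1}-\lambda_{2}=\sqrt{9k^2-8}$ to reach \eqref{binetoct}.

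As an alternative, and as a sanity check, I would verify the formula by induction on $n$ using the octonion recurrence \eqref{octrecursive}: the right-hand side of \eqref{binetoct} is a linear combination of $\lambda_{1}^{n}$ and $\lambda_{2}^{n}$, and since $\lambda_{1},\lambda_{2}$ are the roots of $\lambda^2-3k\lambda+2=0$, i.e. $\lambda_i^2=3k\lambda_i-2$, any such combination satisfies $X_{n+1}=3kX_n-2X_{n-1}$; it then only remains to check the two base cases, namely $(\alpha-\beta)/\sqrt{9k^2-8}=M\mathbb{O}_{k,0}$ and $(\alpha\lambda_{1}-\beta\lambda_{2})/\sqrt{9k^2-8}=M\mathbb{O}_{k,1}$, both of which hold coordinatewise because the $e_r$-components reduce to the scalar Binet values $M_{k,r}$ and $M_{k,r+1}$.

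There is essentially no obstacle here; the computation is routine. The one point that deserves a line of care is that $\alpha$ and $\beta$ are themselves octonions, so I would remark that the manipulations above involve only multiplication by \emph{real} scalars and finite sums of octonions, where the non-associativity and non-commutativity of $\mathbb{O}$ play no role. Accordingly I would present the direct substitution as the main proof and, if desired, mention the inductive verification only as a brief remark.
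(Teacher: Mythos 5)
Your main argument --- substituting the scalar Binet formula $M_{k,n+r}=(\lambda_1^{n+r}-\lambda_2^{n+r})/(\lambda_1-\lambda_2)$ into Definition \ref{kmersenneoct}, pulling the real scalars out of the finite sum, and using $\lambda_1-\lambda_2=\sqrt{9k^2-8}$ --- is exactly the paper's proof, and it is correct; your remark that only real-scalar multiplication is involved (so non-commutativity is irrelevant) is a nice touch the paper leaves implicit. The inductive verification is a fine optional sanity check but adds nothing beyond the direct computation.
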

	\begin{proof}
		By using the Binet formula of $k$-Mersenne \eqref{Binet} in the Definition \ref{kmersenneoct}, we get
		\begin{eqnarray}
			M\mathbb{O}_{k,n} &=&\sum_{r=0}^{7}\left(\dfrac{\lambda_{1}^{n+r}-\lambda_{2}^{n+r}}{\lambda_{1}-\lambda_{2}}\right)e_r \nonumber\\
			&=&\dfrac{1}{\lambda_{1}-\lambda_{2}}\left(\lambda_{1}^{n}\sum_{r=0}^{7}\lambda_{1}^{r}e_r-\lambda_{2}^n\sum_{r=0}^{7}\lambda_{2}^{r}e_r\right) \nonumber\\
			&=& \dfrac{\alpha\lambda_{1}^{n}-\beta\lambda_{2}^n}{\sqrt{9k^2-8}} \nonumber,
		\end{eqnarray}
		where $\alpha=\sum_{r=0}^{7}\lambda_{1}^{r}e_r$ and $\beta=\sum_{r=0}^{7}\lambda_{2}^{r}e_r .$
	\end{proof}
	With the help of the above closed form formula, we obtain some properties of $k$-Mersenne octonions given in the following theorems. Throughout the paper, we use $\alpha=\sum_{r=0}^{7}\lambda_{1}^{r}e_r$ and $\beta=\sum_{r=0}^{7}\lambda_{2}^{r}e_r.$ Note that $ \mathbb{O} $ is a non-commutative algebra and hence $\alpha\beta \ne \beta\alpha.$
	\begin{theorem}[Catalan's Identity]\label{catalan}
		For $n,r\in \mathbb{N}$ such that $n \geq r$, we have
		\begin{enumerate}
			\item
				$M\mathbb{O}_{k,n+r}M\mathbb{O}_{k,n-r}-M\mathbb{O}_{k,n}^2= \dfrac{2^{n-r}[\alpha\beta(2^{r}-\lambda_{1}^{2r})+\beta\alpha(2^{r}-\lambda_{2}^{2r})]}{9k^2-8}, $
			\item 
				$M\mathbb{O}_{k,n-r}M\mathbb{O}_{k,n+r}-M\mathbb{O}_{k,n}^2=
				\dfrac{2^{n-r}[\alpha\beta(2^{r}-\lambda_{2}^{2r})+\beta\alpha(2^{r}-\lambda_{1}^{2r})]}{9k^2-8}. $
		\end{enumerate}	
	\end{theorem}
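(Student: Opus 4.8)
The plan is to feed the closed-form formula \eqref{binetoct} into every octonion on the left-hand side, expand the products, and collapse everything using only the scalar relations $\lambda_{1}\lambda_{2}=2$ and $\lambda_{1}-\lambda_{2}=\sqrt{9k^2-8}$ — while being scrupulous about the order in which $\alpha$ and $\beta$ appear, since $\mathbb{O}$ is not commutative. Write $D=\sqrt{9k^2-8}$. The first thing I would note is that the scalars $\lambda_{1},\lambda_{2}$ commute with the basis units and hence with $\alpha,\beta$, so $M\mathbb{O}_{k,m}=\tfrac{1}{D}\bigl(\alpha\lambda_{1}^{m}-\beta\lambda_{2}^{m}\bigr)$ for every index $m$, and the only noncommutativity we must respect is $\alpha\beta\neq\beta\alpha$.

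For part (1), expanding $M\mathbb{O}_{k,n+r}M\mathbb{O}_{k,n-r}$ produces the two ``pure'' terms $\alpha^{2}\lambda_{1}^{2n}$ and $\beta^{2}\lambda_{2}^{2n}$ together with the two ``mixed'' terms $-\alpha\beta\,\lambda_{1}^{n+r}\lambda_{2}^{n-r}$ and $-\beta\alpha\,\lambda_{2}^{n+r}\lambda_{1}^{n-r}$ (the order $\alpha\beta$ versus $\beta\alpha$ being fixed by which factor is written first). Similarly $M\mathbb{O}_{k,n}^{2}=\tfrac{1}{D^{2}}\bigl(\alpha^{2}\lambda_{1}^{2n}-\alpha\beta(\lambda_{1}\lambda_{2})^{n}-\beta\alpha(\lambda_{1}\lambda_{2})^{n}+\beta^{2}\lambda_{2}^{2n}\bigr)$. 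Subtracting, the $\alpha^{2}$ and $\beta^{2}$ terms cancel and one is left with
\[
M\mathbb{O}_{k,n+r}M\mathbb{O}_{k,n-r}-M\mathbb{O}_{k,n}^{2}=\frac{1}{D^{2}}\Bigl[\alpha\beta\bigl((\lambda_{1}\lambda_{2})^{n}-\lambda_{1}^{n+r}\lambda_{2}^{n-r}\bigr)+\beta\alpha\bigl((\lambda_{1}\lambda_{2})^{n}-\lambda_{2}^{n+r}\lambda_{1}^{n-r}\bigr)\Bigr].
\]
Now substitute $\lambda_{1}\lambda_{2}=2$, use $\lambda_{1}^{n+r}\lambda_{2}^{n-r}=(\lambda_{1}\lambda_{2})^{n-r}\lambda_{1}^{2r}=2^{n-r}\lambda_{1}^{2r}$ and likewise $\lambda_{2}^{n+r}\lambda_{1}^{n-r}=2^{n-r}\lambda_{2}^{2r}$, pull out the common factor $2^{n-r}$, and recall $D^{2}=9k^2-8$; this yields exactly the expression in part (1). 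For part (2) the same expansion applies to $M\mathbb{O}_{k,n-r}M\mathbb{O}_{k,n+r}$; swapping the two octonion factors merely interchanges the exponent pairings, so $\alpha\beta$ now picks up $\lambda_{1}^{n-r}\lambda_{2}^{n+r}=2^{n-r}\lambda_{2}^{2r}$ and $\beta\alpha$ picks up $2^{n-r}\lambda_{1}^{2r}$, giving the stated formula with $\lambda_{1}^{2r}$ and $\lambda_{2}^{2r}$ exchanged.

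The calculation itself is routine; the only point demanding care — and the very reason the statement splits into two cases rather than one — is the noncommutativity of the octonion algebra: one must never merge $\alpha\beta$ with $\beta\alpha$, and must track which product order each pair of exponents is attached to once an order for the two factors $M\mathbb{O}_{k,n+r}$, $M\mathbb{O}_{k,n-r}$ has been chosen. Everything else is bookkeeping with the scalar identities $\lambda_{1}\lambda_{2}=2$ and $\lambda_{1}-\lambda_{2}=\sqrt{9k^2-8}$.
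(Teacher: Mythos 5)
Your proposal is correct and follows exactly the paper's route: substitute the closed form $M\mathbb{O}_{k,m}=(\alpha\lambda_{1}^{m}-\beta\lambda_{2}^{m})/\sqrt{9k^2-8}$, expand while keeping $\alpha\beta$ and $\beta\alpha$ separate, cancel the $\alpha^{2}$ and $\beta^{2}$ terms, and reduce the mixed terms with $\lambda_{1}\lambda_{2}=2$. The only cosmetic difference is that you factor $(\lambda_{1}\lambda_{2})^{n-r}$ directly where the paper first pulls out $2^{n}$ and then rescales by $2^{-r}$; the content is identical.
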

	\begin{proof}[Proof (1)]
		Using relation \eqref{binetoct} for $k$-Mersenne octonions, we can write
		\begin{eqnarray}
			M\mathbb{O}_{k,n+r}M\mathbb{O}_{k,n-r}-M\mathbb{O}_{k,n}^2
			&=&\left(\dfrac{\alpha\lambda_{1}^{n+r}-\beta\lambda_{2}^{n+r}}{\sqrt{9k^2-8}}\right)\left(\dfrac{\alpha\lambda_{1}^{n-r}-\beta\lambda_{2}^{n-r}}{\sqrt{9k^2-8}}\right)-\left(\dfrac{\alpha\lambda_{1}^{n}-\beta\lambda_{2}^{n}}{\sqrt{9k^2-8}}\right)^2\nonumber\\
			&=&\dfrac{\alpha\beta\lambda_{1}^{n}\lambda_{2}^{n}+\beta\alpha\lambda_{1}^{n}\lambda_{2}^{n}-\alpha\beta\lambda_{1}^{n+r}\lambda_{2}^{n-r}-\beta\alpha\lambda_{1}^{n-r}\lambda_{2}^{n+r}}{9k^2-8}\nonumber\\
			&=&\dfrac{2^{n}[\alpha\beta(1-\lambda_{1}^r\lambda_{2}^{-r})+\beta\alpha(1-\lambda_{1}^{-r}\lambda_{2}^{r})]}{9k^2-8}\nonumber\\
			&=&\dfrac{2^{n-r}[\alpha\beta(2^{r}-\lambda_{1}^{2r})+\beta\alpha(2^{r}-\lambda_{2}^{2r})]}{9k^2-8}.\nonumber
		\end{eqnarray}
		The proof of part (2) is similar to (1).		
	\end{proof}
	\begin{theorem}[Cassini's Identity]
		For $n\in \mathbb{N}$, we have
		\begin{enumerate}
			\item 
				$M\mathbb{O}_{k,n+1}M\mathbb{O}_{k,n-1}-M\mathbb{O}_{k,n}^2=
				\dfrac{2^{n-1}[\alpha\beta(2-\lambda_{1}^{2})+\beta\alpha(2-
				\lambda_{2}^{2})]}{9k^2-8}, $
			\item 
				$M\mathbb{O}_{k,n-1}M\mathbb{O}_{k,n+1}-M\mathbb{O}_{k,n}^2=
				\dfrac{2^{n-1}[\alpha\beta(2-\lambda_{2}^{2})+\beta\alpha(2-\lambda_{1}^{2})]}{9k^2-8}$.
		\end{enumerate}	
	\end{theorem}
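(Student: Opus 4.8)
The plan is to observe that Cassini's identity is simply the special case $r=1$ of Catalan's identity (Theorem~\ref{catalan}), so the cleanest route is to substitute $r=1$ into both parts of that theorem. For part (1), plugging $r=1$ into $\dfrac{2^{n-r}[\alpha\beta(2^{r}-\lambda_{1}^{2r})+\beta\alpha(2^{r}-\lambda_{2}^{2r})]}{9k^2-8}$ immediately gives $\dfrac{2^{n-1}[\alpha\beta(2-\lambda_{1}^{2})+\beta\alpha(2-\lambda_{2}^{2})]}{9k^2-8}$, and likewise part (2) follows by setting $r=1$ in the second formula of Theorem~\ref{catalan}. Since Theorem~\ref{catalan} is already established in the excerpt, this is all that is strictly needed.

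For completeness, I would also indicate the direct derivation, which mirrors the proof of Catalan's identity. Using the closed form \eqref{binetoct}, write
\[
M\mathbb{O}_{k,n+1}M\mathbb{O}_{k,n-1}-M\mathbb{O}_{k,n}^2
=\left(\dfrac{\alpha\lambda_{1}^{n+1}-\beta\lambda_{2}^{n+1}}{\sqrt{9k^2-8}}\right)\left(\dfrac{\alpha\lambda_{1}^{n-1}-\beta\lambda_{2}^{n-1}}{\sqrt{9k^2-8}}\right)-\left(\dfrac{\alpha\lambda_{1}^{n}-\beta\lambda_{2}^{n}}{\sqrt{9k^2-8}}\right)^2 .
\]
Expanding, the $\alpha\alpha$ and $\beta\beta$ terms cancel, leaving $\dfrac{1}{9k^2-8}\big[\alpha\beta\lambda_1^n\lambda_2^n+\beta\alpha\lambda_1^n\lambda_2^n-\alpha\beta\lambda_1^{n+1}\lambda_2^{n-1}-\beta\alpha\lambda_1^{n-1}\lambda_2^{n+1}\big]$; then use $\lambda_1\lambda_2=2$ to rewrite $\lambda_1^n\lambda_2^n=2^n$, $\lambda_1^{n+1}\lambda_2^{n-1}=2^{n-1}\lambda_1^2$, and $\lambda_1^{n-1}\lambda_2^{n+1}=2^{n-1}\lambda_2^2$, and factor out $2^{n-1}$ to obtain the claimed expression. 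Part (2) is identical except that the roles of $\lambda_1^2$ and $\lambda_2^2$ inside the two bracketed coefficients are swapped, because the two middle cross terms $-\alpha\beta\lambda_1^{n-1}\lambda_2^{n+1}$ and $-\beta\alpha\lambda_1^{n+1}\lambda_2^{n-1}$ switch.

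There is no real obstacle here: the only point requiring care is that $\mathbb{O}$ is non-associative and non-commutative, so one must keep $\alpha\beta$ and $\beta\alpha$ as distinct quantities throughout and never merge them—exactly as flagged in the remark preceding Theorem~\ref{catalan}. Since the scalars $\lambda_1,\lambda_2$ and their powers are real, they commute freely with the octonionic factors $\alpha,\beta$, so no associativity issue arises in the rearrangements above. I would present the proof as a one-line reduction to Theorem~\ref{catalan} with $r=1$, optionally appending the direct computation for the reader's convenience.
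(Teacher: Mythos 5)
Your proposal is correct and follows exactly the paper's route: the paper proves this theorem in one line by setting $r=1$ in Catalan's identity (Theorem~\ref{catalan}). The supplementary direct computation you sketch is also accurate (it simply reproduces the Catalan proof with $r=1$), but it is not needed beyond the reduction.
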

	\begin{proof}
		For $r=1$ in the Catalan's identity given in the Theorem \ref{catalan}, we get the Cassini's identity.
	\end{proof}
	\begin{theorem}[d'Ocagne's Identity]\label{d'Ocagne}
		Let $n, r$ be any nonnegative integers, then the d'Ocagne's identity for $k$-Mersenne octonions is given by
		$$M\mathbb{O}_{k,r}M\mathbb{O}_{k,n+1}-M\mathbb{O}_{k,r+1}M\mathbb{O}_{k,n}= \dfrac{\alpha\beta\lambda_{1}^r\lambda_{2}^n-\beta\alpha\lambda_{1}^n\lambda_{2}^r}{\sqrt{9k^2-8}}.$$
	\end{theorem}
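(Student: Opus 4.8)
The plan is to follow exactly the Binet-substitution strategy already used for Catalan's and Cassini's identities. First I would replace each of the four octonion factors on the left-hand side by its closed form \eqref{binetoct}, obtaining
$$M\mathbb{O}_{k,r}M\mathbb{O}_{k,n+1}-M\mathbb{O}_{k,r+1}M\mathbb{O}_{k,n}=\frac{1}{9k^2-8}\Big[(\alpha\lambda_{1}^{r}-\beta\lambda_{2}^{r})(\alpha\lambda_{1}^{n+1}-\beta\lambda_{2}^{n+1})-(\alpha\lambda_{1}^{r+1}-\beta\lambda_{2}^{r+1})(\alpha\lambda_{1}^{n}-\beta\lambda_{2}^{n})\Big].$$
Then I would expand both bracketed products term by term, keeping the order of the octonion factors fixed throughout since $\mathbb{O}$ is non-commutative and hence $\alpha\beta$ and $\beta\alpha$ must be carried as distinct quantities (the real powers of $\lambda_{1},\lambda_{2}$ are scalars and may be moved freely).

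The key simplification is that the two "diagonal" contributions cancel: each product contributes a term $\alpha\alpha\,\lambda_{1}^{\,r+n+1}$ and a term $\beta\beta\,\lambda_{2}^{\,r+n+1}$, and these are identical in the two products, so they vanish in the difference. What remains are only the mixed terms, namely $-\alpha\beta\lambda_{1}^{r}\lambda_{2}^{n+1}-\beta\alpha\lambda_{1}^{n+1}\lambda_{2}^{r}+\alpha\beta\lambda_{1}^{r+1}\lambda_{2}^{n}+\beta\alpha\lambda_{1}^{n}\lambda_{2}^{r+1}$.

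Collecting the $\alpha\beta$ and $\beta\alpha$ pieces separately and factoring gives $\alpha\beta\lambda_{1}^{r}\lambda_{2}^{n}(\lambda_{1}-\lambda_{2})+\beta\alpha\lambda_{1}^{n}\lambda_{2}^{r}(\lambda_{2}-\lambda_{1})=(\lambda_{1}-\lambda_{2})\big(\alpha\beta\lambda_{1}^{r}\lambda_{2}^{n}-\beta\alpha\lambda_{1}^{n}\lambda_{2}^{r}\big)$. Dividing by $9k^2-8$ and invoking $\lambda_{1}-\lambda_{2}=\sqrt{9k^2-8}$ produces precisely the asserted expression. I expect the only point needing genuine care — rather than being purely routine — is the non-commutative bookkeeping: one must recognize that the $\alpha^{2}$ and $\beta^{2}$ contributions cancel while resisting the temptation to merge $\alpha\beta$ with $\beta\alpha$. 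Worth noting is that, unlike in Catalan's identity, the relation $\lambda_{1}\lambda_{2}=2$ is not needed here; the derivation is purely algebraic once $\lambda_{1}-\lambda_{2}=\sqrt{9k^2-8}$ is used at the final step.
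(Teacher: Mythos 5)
Your proposal is correct and follows essentially the same route as the paper: substitute the closed form \eqref{binetoct}, cancel the $\alpha\alpha$ and $\beta\beta$ terms, keep $\alpha\beta$ and $\beta\alpha$ distinct, factor out $(\lambda_{1}-\lambda_{2})$ from the mixed terms, and divide by $9k^2-8=(\lambda_{1}-\lambda_{2})^2$ to get the stated right-hand side. Your remark that $\lambda_{1}\lambda_{2}=2$ is not needed here is also accurate.
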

	\begin{proof}
		From Binet formula \eqref{binetoct}, we have
		\begin{eqnarray}
			M\mathbb{O}_{k,r}M\mathbb{O}_{k,n+1}-M\mathbb{O}_{k,r+1}M\mathbb{O}_{k,n}
			&=&\left(\dfrac{\alpha\lambda_{1}^{r}-\beta\lambda_{2}^{r}}{\sqrt{9k^2-8}}\right)\left(\dfrac{\alpha\lambda_{1}^{n+1}-\beta\lambda_{2}^{n+1}}{\sqrt{9k^2-8}}\right)\nonumber\\
			&-&\left(\dfrac{\alpha\lambda_{1}^{r+1}-\beta\lambda_{2}^{r+1}}{\sqrt{9k^2-8}}\right)\left(\dfrac{\alpha\lambda_{1}^{n}-\beta\lambda_{2}^{n}}{\sqrt{9k^2-8}}\right)\nonumber\\
			&=&\dfrac{\alpha\beta\lambda_{1}^{r+1}\lambda_{2}^{n}+\beta\alpha\lambda_{1}^{n}\lambda_{2}^{r+1}-\alpha\beta\lambda_{1}^{r}\lambda_{2}^{n+1}-\beta\alpha\lambda_{1}^{n+1}\lambda_{2}^{r}}{9k^2-8}\nonumber\\
			&=&\dfrac{\alpha\beta\lambda_{1}^r\lambda_{2}^n(\lambda_{1}-\lambda_{2})-\beta\alpha\lambda_{1}^n\lambda_{2}^r(\lambda_{1}-\lambda_{2})}{9k^2-8}\nonumber\\
			&=&\dfrac{\alpha\beta\lambda_{1}^r\lambda_{2}^n-\beta\alpha\lambda_{1}^n\lambda_{2}^r}{\sqrt{9k^2-8}}.\nonumber
		\end{eqnarray}
	As required.	
	\end{proof}
	\begin{theorem}[Vajda's Identity]\label{Vajda}
		Let $n,i$ $ \& j$ be any non-negative integers then we have
		\begin{eqnarray}
			M\mathbb{O}_{k,n+i}M\mathbb{O}_{k,n+j}-M\mathbb{O}_{k,n}M\mathbb{O}_{k,n+i+j}= \dfrac{2^nM_{k,i}[\beta\alpha\lambda_{1}^j-\alpha\beta\lambda_{2}^j]}{\sqrt{9k^2-8}}.\nonumber
		\end{eqnarray}		
	\end{theorem}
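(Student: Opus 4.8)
The plan is to push the closed-form formula \eqref{binetoct} through exactly as in the proofs of Catalan's and d'Ocagne's identities, only now with three index parameters. Write $D=\sqrt{9k^2-8}$. First I would substitute \eqref{binetoct} into the left-hand side, so that $D^2\,(M\mathbb{O}_{k,n+i}M\mathbb{O}_{k,n+j}-M\mathbb{O}_{k,n}M\mathbb{O}_{k,n+i+j})$ equals the difference of the two products $(\alpha\lambda_{1}^{n+i}-\beta\lambda_{2}^{n+i})(\alpha\lambda_{1}^{n+j}-\beta\lambda_{2}^{n+j})$ and $(\alpha\lambda_{1}^{n}-\beta\lambda_{2}^{n})(\alpha\lambda_{1}^{n+i+j}-\beta\lambda_{2}^{n+i+j})$. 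Expanding each product gives four terms; since $\mathbb{O}$ is non-commutative the mixed terms must be kept in order, producing an $\alpha^2$-term, a $\beta^2$-term, an $\alpha\beta$-term and a $\beta\alpha$-term in each. The $\alpha^2\lambda_{1}^{2n+i+j}$ and $\beta^2\lambda_{2}^{2n+i+j}$ contributions coincide in the two products and cancel in the difference.

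What survives is $-\alpha\beta(\lambda_{1}^{n+i}\lambda_{2}^{n+j}-\lambda_{1}^{n}\lambda_{2}^{n+i+j})-\beta\alpha(\lambda_{1}^{n+j}\lambda_{2}^{n+i}-\lambda_{1}^{n+i+j}\lambda_{2}^{n})$. In each of the two brackets I would factor out $(\lambda_{1}\lambda_{2})^{n}=2^{n}$ (using $\lambda_{1}\lambda_{2}=2$) together with the surviving power $\lambda_{2}^{j}$, respectively $\lambda_{1}^{j}$, which exposes the common factor $\lambda_{1}^{i}-\lambda_{2}^{i}$. This collapses the numerator to $2^{n}(\lambda_{1}^{i}-\lambda_{2}^{i})(\beta\alpha\lambda_{1}^{j}-\alpha\beta\lambda_{2}^{j})$. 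Dividing by $D^2=(\lambda_{1}-\lambda_{2})^2$ and using $\dfrac{\lambda_{1}^{i}-\lambda_{2}^{i}}{\lambda_{1}-\lambda_{2}}=M_{k,i}$ from \eqref{Binet} turns one factor $(\lambda_{1}-\lambda_{2})$ into $M_{k,i}$ and leaves $\sqrt{9k^2-8}$ in the denominator, giving the claimed expression $\dfrac{2^{n}M_{k,i}[\beta\alpha\lambda_{1}^{j}-\alpha\beta\lambda_{2}^{j}]}{\sqrt{9k^2-8}}$.

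The computation is entirely mechanical; the one place that needs care is the ordering of the non-commuting products $\alpha\beta$ and $\beta\alpha$ and the signs they carry when the second expanded product is subtracted from the first, since a slip there changes the final answer. Everything else — extracting powers of $2$ via $\lambda_{1}\lambda_{2}=2$ and recognizing the $k$-Mersenne Binet quotient — is routine. As a consistency check, specializing the indices should recover the earlier results: taking $i=j=r$ and shifting $n\mapsto n-r$ reproduces (a reordering of) Catalan's identity, and an appropriate choice of $i,j$ reduces the statement to d'Ocagne's identity of Theorem \ref{d'Ocagne}.
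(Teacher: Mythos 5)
Your proposal is correct and follows essentially the same route as the paper: substitute the closed form \eqref{binetoct}, note the cancellation of the $\alpha^2$ and $\beta^2$ terms, keep the mixed $\alpha\beta$ and $\beta\alpha$ terms in order, factor out $(\lambda_1\lambda_2)^n=2^n$ and $\lambda_1^i-\lambda_2^i$, and convert the latter to $M_{k,i}\sqrt{9k^2-8}$ via the Binet formula. The surviving terms, signs, and final simplification all match the paper's computation, and your consistency checks against Catalan's and d'Ocagne's identities are sound.
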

	\begin{proof}
		By using the Binet formula for the $k$-Mersenne octonions, we have
		\begin{eqnarray}
			M\mathbb{O}_{k,n+i}M\mathbb{O}_{k,n+j}-M\mathbb{O}_{k,n}M\mathbb{O}_{k,n+i+j}
			&=&\left(\dfrac{\alpha\lambda_{1}^{n+i}-\beta\lambda_{2}^{n+i}}{\sqrt{9k^2-8}}\right)\left(\dfrac{\alpha\lambda_{1}^{n+j}-\beta\lambda_{2}^{n+j}}{\sqrt{9k^2-8}}\right)\nonumber\\
			&-&\left(\dfrac{\alpha\lambda_{1}^{n}-\beta\lambda_{2}^{n}}{\sqrt{9k^2-8}}\right)\left(\dfrac{\alpha\lambda_{1}^{n+i+j}-\beta\lambda_{2}^{n+i+j}}{\sqrt{9k^2-8}}\right)\nonumber\\
			&=&\dfrac{\alpha\beta\lambda_{1}^{n}\lambda_{2}^{n+i+j}+\beta\alpha\lambda_{1}^{n+i+j}\lambda_{2}^{n}-\alpha\beta\lambda_{1}^{n+i}\lambda_{2}^{n+j}-\beta\alpha\lambda_{1}^{n+j}\lambda_{2}^{n+i}}{9k^2-8}\nonumber\\
			&=&\dfrac{(\lambda_{1}\lambda_{2})^n[\alpha\beta\lambda_{2}^j(\lambda_{2}^i-\lambda_{1}^i)+\beta\alpha\lambda_{1}^j(\lambda_{1}^i-\lambda_{2}^i)]}{9k^2-8}\nonumber\\
			&=&\dfrac{2^nM_{k,i}[\beta\alpha\lambda_{1}^j-\alpha\beta\lambda_{2}^j]}{\sqrt{9k^2-8}}.\nonumber
		\end{eqnarray}
	As required.
	\end{proof}
	\begin{theorem}
		The ordinary and exponential generating function for the $k$-Mersenne octonions are given, respectively, as
		\begin{enumerate}
			\item $ \sum_{n=0}^{\infty}M\mathbb{O}_{k,n}x^n=\dfrac{M\mathbb{O}_{k,0}+x\left(M\mathbb{O}_{k,1}-3kM\mathbb{O}_{k,0}\right)}{ 1-3x+2x^2}, $
			\item $ \sum_{n=0}^{\infty}\dfrac{M\mathbb{O}_{k,n}x^n}{n!}= \dfrac{\alpha e^{\lambda_{1}x}-\beta e^{\lambda_{2}x}}{\sqrt{9k^2-8}}. $
		\end{enumerate}
	\end{theorem}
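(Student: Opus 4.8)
The plan is to dispatch the two parts by the classical techniques for linear recurrences: a denominator-clearing manipulation for the ordinary generating function, and direct substitution of the closed form \eqref{binetoct} for the exponential one.

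For part (1), I would set $g(x)=\sum_{n=0}^{\infty}M\mathbb{O}_{k,n}x^{n}$, viewed as a power series whose coefficients lie in $\mathbb{O}$; since $x$ is a scalar indeterminate it commutes with every $e_r$, so the non-commutativity of $\mathbb{O}$ causes no trouble. Then I would form
\[
(1-3kx+2x^{2})\,g(x)=\sum_{n\ge0}M\mathbb{O}_{k,n}x^{n}-3k\sum_{n\ge0}M\mathbb{O}_{k,n}x^{n+1}+2\sum_{n\ge0}M\mathbb{O}_{k,n}x^{n+2},
\]
reindex so that for $n\ge2$ the coefficient of $x^{n}$ equals $M\mathbb{O}_{k,n}-3kM\mathbb{O}_{k,n-1}+2M\mathbb{O}_{k,n-2}$, and invoke the recurrence \eqref{octrecursive} to see that every such coefficient vanishes. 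What survives is the constant term $M\mathbb{O}_{k,0}$ together with the $x^{1}$-coefficient $M\mathbb{O}_{k,1}-3kM\mathbb{O}_{k,0}$, so that $g(x)=\bigl(M\mathbb{O}_{k,0}+(M\mathbb{O}_{k,1}-3kM\mathbb{O}_{k,0})x\bigr)/(1-3kx+2x^{2})$, which is the stated identity once the denominator is read as $1-3kx+2x^{2}$ in accordance with \eqref{octrecursive}.

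For part (2), I would start from the closed form \eqref{binetoct}, namely $M\mathbb{O}_{k,n}=\bigl(\alpha\lambda_{1}^{n}-\beta\lambda_{2}^{n}\bigr)/\sqrt{9k^{2}-8}$, and substitute it into the exponential generating series. Pulling the real scalar $1/\sqrt{9k^{2}-8}$ and the fixed octonions $\alpha,\beta$ through the sum — legitimate because the series $\sum_n (\lambda_i x)^n/n!$ converges absolutely for every $x\in\mathbb{R}$ — gives
\[
\sum_{n=0}^{\infty}\frac{M\mathbb{O}_{k,n}}{n!}x^{n}=\frac{1}{\sqrt{9k^{2}-8}}\left(\alpha\sum_{n=0}^{\infty}\frac{(\lambda_{1}x)^{n}}{n!}-\beta\sum_{n=0}^{\infty}\frac{(\lambda_{2}x)^{n}}{n!}\right)=\frac{\alpha e^{\lambda_{1}x}-\beta e^{\lambda_{2}x}}{\sqrt{9k^{2}-8}},
\]
using $\sum_{n\ge0}t^{n}/n!=e^{t}$ with $t=\lambda_{1}x$ and $t=\lambda_{2}x$.

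There is no deep obstacle here; both computations are routine. The only points deserving care are the reindexing in part (1) — making sure that exactly the $n=0$ and $n=1$ terms remain, with the correct coefficients, since an off-by-one or sign slip would hide precisely there — and, in part (2), the order of the octonion factors $\alpha,\beta$ relative to the scalar sum, which may be interchanged freely because $x$, $\lambda_{1}$, $\lambda_{2}$ and $1/\sqrt{9k^{2}-8}$ are real. (Part (2) could alternatively be obtained from the differential/recursive characterisation of the exponential generating function, but substituting \eqref{binetoct} is the shortest route.)
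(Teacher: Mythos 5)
Your proof is correct, but for part (1) you take a genuinely different route from the paper. The paper proves the ordinary generating function by substituting the closed form \eqref{binetoct} into $\sum_n M\mathbb{O}_{k,n}x^n$, summing the two geometric series $\sum_n(\lambda_i x)^n=1/(1-\lambda_i x)$, combining over the common denominator $1-3kx+2x^2$, and then recognising $(\alpha-\beta)/\sqrt{9k^2-8}=M\mathbb{O}_{k,0}$ and $(\beta\lambda_1-\alpha\lambda_2)/\sqrt{9k^2-8}=M\mathbb{O}_{k,1}-3kM\mathbb{O}_{k,0}$ in the numerator; you instead multiply the series by $1-3kx+2x^2$ and let the recurrence \eqref{octrecursive} annihilate all coefficients from $x^2$ onward. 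Your method is purely formal (no geometric-series convergence condition $|\lambda_i x|<1$ needed, and no final re-identification of $\alpha,\beta$ combinations with initial octonions), while the paper's method has the small advantage of reusing the Binet machinery already set up and of transferring verbatim to part (2), which the paper omits as ``same as (1)'' and which you prove exactly as intended, by substituting \eqref{binetoct} and using $\sum_n t^n/n!=e^t$. You also correctly note that the denominator in the theorem statement should read $1-3kx+2x^2$ rather than $1-3x+2x^2$; this is indeed a typo in the statement, as both your derivation and the paper's own computation produce the $k$-dependent denominator.
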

	\begin{proof}[Proof (1)]
		Consider the $k$-Mersenne octonion sequence $\{ M\mathbb{O}_{k,n}\}_{n=0}^\infty$ then the ordinary generating function for this sequence is
		\begin{eqnarray}
			gM\mathbb{O}(x)=\sum_{n=0}^{\infty}M\mathbb{O}_{k,n}x^n.\nonumber
		\end{eqnarray}
		Now using the closed form formula\eqref{binetoct}, we obtain
		\begin{eqnarray}
			\sum_{n=0}^{\infty}M\mathbb{O}_{k,n}x^n
			&=&\sum_{n=0}^{\infty}\left(\dfrac{\alpha\lambda_{1}^{n}-\beta\lambda_{2}^n}{\sqrt{9k^2-8}}\right)x^n\nonumber\\
			&=&\dfrac{1}{\sqrt{9k^2-8}}\left[\alpha\sum_{n=0}^{\infty}(\lambda_{1}x)^n-\beta\sum_{n=0}^{\infty}(\lambda_{2}x)^n\right]\nonumber\\
			&=&\dfrac{1}{\sqrt{9k^2-8}}\left[\alpha\left(\dfrac{1}{1-\lambda_{1}x}\right)-\beta\left(\dfrac{1}{1-\lambda_{2}x}\right)\right] \nonumber\\
			&=&\dfrac{1}{\sqrt{9k^2-8}}\left[\dfrac{\left(\alpha-\beta\right)+x\left(\beta\lambda_{1}-\alpha\lambda_{2}\right)}{ 1-3kx+2x^2}\right] \nonumber\\
			&=&\dfrac{M\mathbb{O}_{k,0}+x\left(M\mathbb{O}_{k,1}-3kM\mathbb{O}_{k,0}\right)}{ 1-3kx+2x^2}. \nonumber
		\end{eqnarray}
		As required.
		\\Proof of (\textit{2}) is same as of (\textit{1}), so we omit it. 
	\end{proof}
	\begin{theorem}
		 For $k \ne 1$, the finite sum formula for $k$-Mersenne octonions is given by,
			\begin{eqnarray}
				\sum_{j=0}^{n}M\mathbb{O}_{k,j}=\dfrac{2M\mathbb{O}_{k,n}-M\mathbb{O}_{k,n+1}+M\mathbb{O}_{k,1}+M\mathbb{O}_{k,0}(1-3k)}{3(1-k)}.\nonumber
			\end{eqnarray}
	\end{theorem}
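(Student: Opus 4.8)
The plan is to derive the identity directly from the defining recurrence \eqref{octrecursive} by summing it over a suitable range and collapsing the resulting shifted sums, rather than going through the closed form \eqref{binetoct}. Write $S_n=\sum_{j=0}^{n}M\mathbb{O}_{k,j}$ for the quantity of interest, so that the goal is to solve a linear equation for $S_n$.

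First I would record the recurrence in homogeneous form $M\mathbb{O}_{k,j+1}-3kM\mathbb{O}_{k,j}+2M\mathbb{O}_{k,j-1}=0$, valid for every $j\geq 1$, and sum it over $j=1,2,\dots,n$. Each of the three resulting sums is, after an index shift, a copy of $S_n$ corrected by a bounded number of boundary terms: $\sum_{j=1}^{n}M\mathbb{O}_{k,j+1}=S_n-M\mathbb{O}_{k,0}-M\mathbb{O}_{k,1}+M\mathbb{O}_{k,n+1}$, $\sum_{j=1}^{n}M\mathbb{O}_{k,j}=S_n-M\mathbb{O}_{k,0}$, and $\sum_{j=1}^{n}M\mathbb{O}_{k,j-1}=S_n-M\mathbb{O}_{k,n}$. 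Substituting these into the summed recurrence, the coefficient of $S_n$ becomes $1-3k+2=3(1-k)$, and collecting the boundary terms gives $3(1-k)\,S_n=2M\mathbb{O}_{k,n}-M\mathbb{O}_{k,n+1}+M\mathbb{O}_{k,1}+M\mathbb{O}_{k,0}(1-3k)$. Dividing by $3(1-k)$, which is legitimate precisely because of the hypothesis $k\neq 1$, yields the claimed formula.

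There is essentially no analytic obstacle here; the whole content is careful bookkeeping of the three index shifts and of the endpoint terms, so the only point to be vigilant about is not to drop a term such as $M\mathbb{O}_{k,n+1}$ and to respect that the recurrence is assumed only for $j\geq 1$ (i.e. the $j=1$ end of the range must be handled honestly). One could alternatively argue by induction on $n$, using \eqref{octrecursive} in the inductive step, or by inserting the Binet expression \eqref{binetoct} into $S_n$ and summing the two finite geometric series in $\lambda_{1}$ and $\lambda_{2}$ separately; that route also makes transparent why the denominator $3(1-k)$ arises (from $\sum_{j=0}^{n}\lambda_i^{j}=(\lambda_i^{n+1}-1)/(\lambda_i-1)$ together with $\lambda_{1}+\lambda_{2}=3k$ and $\lambda_{1}\lambda_{2}=2$) and why $k=1$ must be excluded. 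I would present the summation argument as the main proof, since it is the shortest and avoids reassembling octonion-valued geometric sums.
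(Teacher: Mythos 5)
Your argument is correct, and it takes a genuinely different route from the paper. The paper proves this by inserting the closed form $M\mathbb{O}_{k,j}=(\alpha\lambda_{1}^{j}-\beta\lambda_{2}^{j})/\sqrt{9k^2-8}$, summing the two finite geometric series $\sum_{j=0}^{n}\lambda_i^{j}=(\lambda_i^{n+1}-1)/(\lambda_i-1)$, putting everything over the common denominator $\lambda_1\lambda_2-(\lambda_1+\lambda_2)+1=3(1-k)$, and then reassembling the numerator into octonions using $\lambda_1\lambda_2=2$, $\lambda_1+\lambda_2=3k$, and $M\mathbb{O}_{k,1}-3kM\mathbb{O}_{k,0}=(\beta\lambda_1-\alpha\lambda_2)/\sqrt{9k^2-8}$ — exactly the route you mention only as an alternative. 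Your primary argument instead sums the recurrence $M\mathbb{O}_{k,j+1}-3kM\mathbb{O}_{k,j}+2M\mathbb{O}_{k,j-1}=0$ over $j=1,\dots,n$ and solves the resulting linear equation for $S_n$; I checked the three index shifts and the boundary terms, and they collapse to $3(1-k)S_n=2M\mathbb{O}_{k,n}-M\mathbb{O}_{k,n+1}+M\mathbb{O}_{k,1}+M\mathbb{O}_{k,0}(1-3k)$ as you claim (for $n=0$ the three displayed identities still hold with empty sums, so the derivation degenerates gracefully). Your approach is more elementary — it needs only linearity of the recurrence and never touches the Binet formula or the roots $\lambda_1,\lambda_2$ — and it makes the origin of the denominator $3(1-k)$ and the exclusion $k\neq1$ equally transparent (it is the value of the characteristic polynomial $1-3kx+2x^2$ at $x=1$). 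What the paper's route buys is uniformity: the identical geometric-series computation is reused almost verbatim for the $k$-Mersenne-Lucas octonions and for the generating functions, so the Binet formula serves as a single engine for the whole section.
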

	\begin{proof}
		 Using the Binet formula, we can write
			\begin{eqnarray}
				\sum_{j=0}^{n}M\mathbb{O}_{k,j}
				&=&\sum_{j=0}^{n}\left(\dfrac{\alpha\lambda_{1}^{j}-\beta\lambda_{2}^j}{\sqrt{9k^2-8}}\right)\nonumber\\
				&=&\dfrac{1}{\sqrt{9k^2-8}}\left[\alpha\sum_{j=0}^{n}\lambda_{1}^{j}-\beta\sum_{j=0}^{n}\lambda_{2}^j\right]\nonumber\\
				&=&\dfrac{1}{\sqrt{9k^2-8}}\left[\alpha\left(\dfrac{\lambda_{1}^{n+1}-1}{\lambda_{1}-1}\right)-\beta\left(\dfrac{\lambda_{2}^{n+1}-1}{\lambda_{2}-1}\right)\right]\nonumber\\
				&=&\dfrac{1}{\sqrt{9k^2-8}}\left[\dfrac{\lambda_{1}\lambda_{2}(\alpha\lambda_{1}^n-\beta\lambda_{2}^n)-(\alpha\lambda_{1}^{n+1}-\beta\lambda_{2}^{n+1})+(\beta\lambda_{1}-\alpha\lambda_{2})+(\alpha-\beta)}{\lambda_{1}\lambda_{2}-(\lambda_{1}+\lambda_{2})+1}\right]
				\nonumber\\
				&=&\dfrac{2M\mathbb{O}_{k,n}-M\mathbb{O}_{k,n+1}+M\mathbb{O}_{k,1}+M\mathbb{O}_{k,0}(1-3k)}{3(1-k)}.\nonumber
			\end{eqnarray} As required.
	\end{proof}
	\section{$k$-Mersenne-Lucas Octonions}
	In this section, we introduce the $k$-Mersenne-Lucas Octonions and present some properties of them like the Binet formula, generating function and some well-known identities.
	\begin{definition}\label{kmerlucasoct}
		For $n\geq 0$, the $n^{th}$ $k$-Mersenne-Lucas octonion $m\mathbb{O}_{k,n}$ is defined as
		\begin{eqnarray}
			m\mathbb{O}_{k,n}=\sum_{r=0}^{7}m_{k,n+r}e_r, 
		\end{eqnarray}
		where $m_{k,n}$ is the $n^{th}$ $k$-Mersenne-Lucas number.
	\end{definition}
	Using expression \eqref{k-mersene} in the Definition \ref{kmerlucasoct} and after some elementary calculations, we get the  recurrence relation for the $k$-Mersenne-Lucas octonions given as
	\begin{eqnarray}\label{k-lucasoctrecursive}
		m\mathbb{O}_{k,n+1} = 3km\mathbb{O}_{k,n} -2m\mathbb{O}_{k,n-1}. 
	\end{eqnarray}
	Note that for $k=1$, we have the definition of the Mersenne-Lucas octonion given recursively by
	\begin{eqnarray}\label{lucasoctrecursive}
		m\mathbb{O}_{n+1} = 3m\mathbb{O}_{n} -2m\mathbb{O}_{n-1}. 
	\end{eqnarray}
	The conjugate of the $k$-Mersenne-Lucas octonion $m\mathbb{O}_{k,n}$ can be written as
	\begin{eqnarray}
		\overline{m\mathbb{O}}_{k,n}=m_{k,0}-\sum_{r=1}^{7}m_{k,n+r}e_r.
	\end{eqnarray}
	\begin{theorem}
		For $n\geq 0$, the norm of the $ n^{th} $ $k$-Mersenne-Lucas octonion $m\mathbb{O}_{k,n}$ is 
		\begin{eqnarray}
			N(m\mathbb{O}_{k,n})=\sqrt{\lambda_{1}^{2n}(1+\lambda_{1}^2+...+\lambda_{1}^{14})+\lambda_{2}^{2n}(1+\lambda_{2}^2+...+\lambda_{2}^{14})+255.2^{n+1}}.
		\end{eqnarray}
	\end{theorem}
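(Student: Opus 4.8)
The plan is to mimic verbatim the computation used above for the norm of the $k$-Mersenne octonion, but feeding in the Mersenne-Lucas Binet formula instead of the Mersenne one. By the definition of the octonion norm \eqref{octnorm} applied to $m\mathbb{O}_{k,n}=\sum_{r=0}^{7}m_{k,n+r}e_r$, we have $N(m\mathbb{O}_{k,n})^{2}=\sum_{r=0}^{7}m_{k,n+r}^{2}$. First I would substitute $m_{k,n+r}=\lambda_{1}^{n+r}+\lambda_{2}^{n+r}$ from \eqref{Binet} and expand each square term by term, so that the generic summand becomes $\lambda_{1}^{2(n+r)}+\lambda_{2}^{2(n+r)}+2(\lambda_{1}\lambda_{2})^{n+r}$, and then use $\lambda_{1}\lambda_{2}=2$ to rewrite the mixed term as $2\cdot 2^{n+r}$.

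Next I would split the sum over $r$ into three pieces. The first two are geometric in $\lambda_1^2$ and $\lambda_2^2$ respectively, giving $\sum_{r=0}^{7}\lambda_{1}^{2(n+r)}=\lambda_{1}^{2n}\bigl(1+\lambda_{1}^{2}+\cdots+\lambda_{1}^{14}\bigr)$ and the analogous identity with $\lambda_2$. The third is $\sum_{r=0}^{7}2\cdot 2^{n+r}=2^{\,n+1}\sum_{r=0}^{7}2^{r}=2^{\,n+1}(2^{8}-1)=255\cdot 2^{\,n+1}$. Adding the three contributions yields exactly the expression under the radical in the statement, and taking the positive square root completes the proof.

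The argument presents essentially no obstacle; it is a routine expansion. The one place deserving a word of care is the sign and the absence of a denominator: since the Mersenne-Lucas Binet formula is the \emph{sum} $\lambda_{1}^{n+r}+\lambda_{2}^{n+r}$ rather than the difference $(\lambda_{1}^{n+r}-\lambda_{2}^{n+r})/(\lambda_{1}-\lambda_{2})$ used for $M_{k,n}$, the cross terms come in with a plus sign and there is no factor $1/(9k^{2}-8)$, which is precisely what makes this norm formula carry $+255\cdot 2^{\,n+1}$ instead of $-255\cdot 2^{\,n+1}$ over $9k^{2}-8$.
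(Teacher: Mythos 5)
Your proposal is correct and follows the same route as the paper: apply the norm definition, substitute the Binet formula $m_{k,n+r}=\lambda_{1}^{n+r}+\lambda_{2}^{n+r}$, expand the squares, and sum the geometric pieces together with the cross terms $2(\lambda_{1}\lambda_{2})^{n+r}=2\cdot 2^{n+r}$ to get $255\cdot 2^{n+1}$. Your extra remark about the plus sign and the absence of the $9k^{2}-8$ denominator is exactly the right point of contrast with the $k$-Mersenne case.
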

	\begin{proof}
		By the definition of norm, we have
		\begin{eqnarray}
			N(m\mathbb{O}_{k,n})^2&=&\sum_{r=0}^{7}m_{k,n+r}^2\nonumber\\
			&=&\sum_{r=0}^{7}\left(\lambda_{1}^{n+r}+\lambda_{2}^{n+r}\right)^2\nonumber\\
			&=&\lambda_{1}^{2n}(1+\lambda_{1}^2+...+\lambda_{1}^{14})+\lambda_{2}^{2n}(1+\lambda_{2}^2+...+\lambda_{2}^{14})+255.2^{n+1}.\nonumber
		\end{eqnarray} As required.
	\end{proof}
	\begin{theorem}
		The closed form formula of the $k$-Mersenne-Lucas octonions is given as
		\begin{eqnarray}\label{binetoctk-lucas}
			m\mathbb{O}_{k,n}=\alpha\lambda_{1}^{n}+\beta\lambda_{2}^n,
		\end{eqnarray}
		where $\alpha=\sum_{r=0}^{7}\lambda_{1}^{r}e_r$ and $\beta=\sum_{r=0}^{7}\lambda_{2}^{r}e_r .$
	\end{theorem}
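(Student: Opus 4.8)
The plan is to follow exactly the route used for the closed form formula \eqref{binetoct} of the $k$-Mersenne octonions: substitute the Binet formula for the $k$-Mersenne-Lucas \emph{numbers} into Definition \ref{kmerlucasoct} and then factor the resulting basis sum.

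First I would recall from \eqref{Binet} that $m_{k,n}=\lambda_{1}^{n}+\lambda_{2}^{n}$, so $m_{k,n+r}=\lambda_{1}^{n+r}+\lambda_{2}^{n+r}$ for every $r\in\{0,1,\dots,7\}$. Inserting this into Definition \ref{kmerlucasoct} and splitting the sum over the two characteristic roots yields
\begin{eqnarray}
	m\mathbb{O}_{k,n}=\sum_{r=0}^{7}\left(\lambda_{1}^{n+r}+\lambda_{2}^{n+r}\right)e_r=\lambda_{1}^{n}\sum_{r=0}^{7}\lambda_{1}^{r}e_r+\lambda_{2}^{n}\sum_{r=0}^{7}\lambda_{2}^{r}e_r.\nonumber
\end{eqnarray}
Since $\lambda_{1}^{n}$ and $\lambda_{2}^{n}$ are real scalars, pulling them out past the $e_r$ is legitimate, and the two inner sums are precisely $\alpha$ and $\beta$; this gives $m\mathbb{O}_{k,n}=\alpha\lambda_{1}^{n}+\beta\lambda_{2}^{n}$, as claimed.

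As a cross-check I would also verify the formula by induction on $n$ using the recurrence \eqref{k-lucasoctrecursive}: the base cases amount to checking $\alpha+\beta=m\mathbb{O}_{k,0}$ and $\alpha\lambda_{1}+\beta\lambda_{2}=m\mathbb{O}_{k,1}$ (both following from the same substitution together with $m_{k,0}=2$, $m_{k,1}=3k$), while the inductive step is immediate from $\lambda_{i}^{2}=3k\lambda_{i}-2$. I do not expect any genuine obstacle here; the only subtlety worth a remark is the non-commutativity of $\mathbb{O}$, but because $\lambda_{1}^{n},\lambda_{2}^{n}\in\mathbb{R}$ no reordering of octonion products is ever needed, so the factorization step is unambiguous.
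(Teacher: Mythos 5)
Your proof is correct and follows exactly the same route as the paper: substitute the Binet formula $m_{k,n}=\lambda_{1}^{n}+\lambda_{2}^{n}$ into Definition \ref{kmerlucasoct}, split the sum over the two roots, and factor out the real scalars $\lambda_{1}^{n}$ and $\lambda_{2}^{n}$ to recognize $\alpha$ and $\beta$. The additional inductive cross-check and the remark on non-commutativity are harmless extras not present in the paper's proof.
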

	\begin{proof} 
		By using the Binet formula for $k$-Mersenne-Lucas in the Definition \ref{kmerlucasoct}, we get
		\begin{eqnarray}
			m\mathbb{O}_{k,n} &=&\sum_{r=0}^{7}\left(\lambda_{1}^{n+r}+\lambda_{2}^{n+r}\right)e_r \nonumber\\
			&=&\left(\lambda_{1}^{n}\sum_{r=0}^{7}\lambda_{1}^{r}e_r+\lambda_{2}^n\sum_{r=0}^{7}\lambda_{2}^{r}e_r\right) \nonumber\\
			&=& \alpha\lambda_{1}^{n}+\beta\lambda_{2}^n \nonumber,
		\end{eqnarray}
		where $\alpha=\sum_{r=0}^{7}\lambda_{1}^{r}e_r$ and $\beta=\sum_{r=0}^{7}\lambda_{2}^{r}e_r .$
	\end{proof}
	\begin{theorem}[Catalan's Identity]\label{catalanlucas}
		For $n,r\in \mathbb{N}$ such that $n \geq r$, we have
		\begin{enumerate}
			\item
			$m\mathbb{O}_{k,n+r}m\mathbb{O}_{k,n-r}-m\mathbb{O}_{k,n}^2= 2^{n-r}[\alpha\beta(\lambda_{1}^{2r}-2^{r})+\beta\alpha(\lambda_{2}^{2r}-2^{r})], $
			\item 
			$m\mathbb{O}_{k,n-r}m\mathbb{O}_{k,n+r}-m\mathbb{O}_{k,n}^2=
			2^{n-r}[\alpha\beta(\lambda_{2}^{2r}-2^{r})+\beta\alpha(\lambda_{1}^{2r}-2^{r})]. $
		\end{enumerate}	
	\end{theorem}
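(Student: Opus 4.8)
The plan is to imitate the proof of Catalan's identity for the $k$-Mersenne octonions (Theorem~\ref{catalan}), only replacing the closed form \eqref{binetoct} by the Binet-type formula \eqref{binetoctk-lucas} for $k$-Mersenne-Lucas octonions, namely $m\mathbb{O}_{k,n}=\alpha\lambda_{1}^{n}+\beta\lambda_{2}^{n}$. Since the $\lambda_i$ are real scalars they commute with the octonions $\alpha,\beta$, so the only non-commutativity we must respect is $\alpha\beta\neq\beta\alpha$; in particular we must never collapse an $\alpha\beta$ term against a $\beta\alpha$ term.

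For part (1), I would expand
$$m\mathbb{O}_{k,n+r}\,m\mathbb{O}_{k,n-r}=(\alpha\lambda_{1}^{n+r}+\beta\lambda_{2}^{n+r})(\alpha\lambda_{1}^{n-r}+\beta\lambda_{2}^{n-r})$$
and
$$m\mathbb{O}_{k,n}^2=(\alpha\lambda_{1}^{n}+\beta\lambda_{2}^{n})^2,$$
keeping the four ordered products $\alpha^2,\ \alpha\beta,\ \beta\alpha,\ \beta^2$ separate. The $\alpha^2\lambda_{1}^{2n}$ and $\beta^2\lambda_{2}^{2n}$ terms cancel in the difference, leaving only the mixed terms
$$m\mathbb{O}_{k,n+r}m\mathbb{O}_{k,n-r}-m\mathbb{O}_{k,n}^2=\alpha\beta(\lambda_{1}^{n+r}\lambda_{2}^{n-r}-\lambda_{1}^{n}\lambda_{2}^{n})+\beta\alpha(\lambda_{1}^{n-r}\lambda_{2}^{n+r}-\lambda_{1}^{n}\lambda_{2}^{n}).$$
Then I factor $(\lambda_{1}\lambda_{2})^n=2^n$ out of each bracket and invoke $\lambda_{1}\lambda_{2}=2$ in the form $\lambda_{1}/\lambda_{2}=\lambda_{1}^2/2$ and $\lambda_{2}/\lambda_{1}=\lambda_{2}^2/2$ (already recorded in the excerpt) to rewrite $\lambda_{1}^{r}\lambda_{2}^{-r}=\lambda_{1}^{2r}/2^r$ and $\lambda_{1}^{-r}\lambda_{2}^{r}=\lambda_{2}^{2r}/2^r$. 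Pulling out the common factor $2^{-r}$ yields exactly $2^{n-r}[\alpha\beta(\lambda_{1}^{2r}-2^r)+\beta\alpha(\lambda_{2}^{2r}-2^r)]$, as claimed.

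For part (2) I would run the identical computation with the two factors interchanged, i.e. starting from $m\mathbb{O}_{k,n-r}\,m\mathbb{O}_{k,n+r}$; this merely swaps which mixed product carries $\lambda_{1}^{2r}$ and which carries $\lambda_{2}^{2r}$, producing the second formula. I expect no genuine obstacle: the argument is a direct substitute-and-simplify computation, and the only point demanding care is maintaining the correct ordering of the octonion products $\alpha\beta$ versus $\beta\alpha$ throughout the expansion, since — unlike in the classical scalar Catalan identity — these cannot be combined. (The hypothesis $n\geq r$ is used only to ensure $m\mathbb{O}_{k,n-r}$ has a nonnegative index.)
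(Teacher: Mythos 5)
Your proposal is correct and follows essentially the same route as the paper: expand via the closed form $m\mathbb{O}_{k,n}=\alpha\lambda_{1}^{n}+\beta\lambda_{2}^{n}$, cancel the $\alpha^{2}$ and $\beta^{2}$ terms, keep the ordered mixed products $\alpha\beta$ and $\beta\alpha$ separate, factor out $(\lambda_{1}\lambda_{2})^{n}=2^{n}$, and use $\lambda_{1}\lambda_{2}=2$ to rewrite $\lambda_{1}^{r}\lambda_{2}^{-r}=\lambda_{1}^{2r}/2^{r}$. Your write-up is in fact slightly more careful than the paper's (which contains a misplaced bracket in its third displayed line), and your treatment of part (2) by swapping the factors matches the paper's ``similar argument'' remark.
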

	\begin{proof}
		Using the Binet formula for $k$-Mersenne-Lucas octonions, we write
		\begin{eqnarray}
			m\mathbb{O}_{k,n+r}m\mathbb{O}_{k,n-r}-m\mathbb{O}_{k,n}^2
			&=&\left(\alpha\lambda_{1}^{n+r}+\beta\lambda_{2}^{n+r}\right)\left(\alpha\lambda_{1}^{n-r}+\beta\lambda_{2}^{n-r}\right)-\left(\alpha\lambda_{1}^{n}+\beta\lambda_{2}^{n}\right)^2\nonumber\\
			&=&\alpha\beta\lambda_{1}^{n+r}\lambda_{2}^{n-r}+\beta\alpha\lambda_{1}^{n-r}\lambda_{2}^{n+r}-\alpha\beta\lambda_{1}^{n}\lambda_{2}^{n}-\beta\alpha\lambda_{1}^{n}\lambda_{2}^{n}\nonumber\\
			&=&2^{n}[\alpha\beta(\lambda_{1}^r\lambda_{2}^{-r}-1)+\beta\alpha(\lambda_{1}^{-r}\lambda_{2}^{r})-1]\nonumber\\
			&=&2^{n-r}[\alpha\beta(\lambda_{1}^{2r}-2^{r})+\beta\alpha(\lambda_{2}^{2r}-2^{r})].\nonumber
		\end{eqnarray}
		By a similar argument, (2) can be proved so we omit it. 		
	\end{proof}
	\begin{theorem}[Cassini's Identity]
		For $n\in \mathbb{N}$, we have
		\begin{enumerate}
			\item 
			$m\mathbb{O}_{k,n+1}m\mathbb{O}_{k,n-1}-m\mathbb{O}_{k,n}^2=
			2^{n-1}[\alpha\beta(\lambda_{1}^{2}-2)+\beta\alpha(
				\lambda_{2}^{2}-2)], $
			\item 
			$m\mathbb{O}_{k,n-1}m\mathbb{O}_{k,n+1}-m\mathbb{O}_{k,n}^2=
			2^{n-1}[\alpha\beta(\lambda_{2}^{2}-2)+\beta\alpha(\lambda_{1}^{2}-2)].$
		\end{enumerate}	
	\end{theorem}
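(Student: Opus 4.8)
The plan is to obtain this Cassini identity as the special case $r=1$ of the Catalan identity for $k$-Mersenne-Lucas octonions (Theorem~\ref{catalanlucas}), exactly as was done for the $k$-Mersenne octonions. Concretely, in part~(1) of Theorem~\ref{catalanlucas} one substitutes $r=1$; since $\lambda_1^{2r}=\lambda_1^2$, $\lambda_2^{2r}=\lambda_2^2$, and $2^{r}=2$, the right-hand side $2^{n-r}[\alpha\beta(\lambda_1^{2r}-2^{r})+\beta\alpha(\lambda_2^{2r}-2^{r})]$ collapses to $2^{n-1}[\alpha\beta(\lambda_1^2-2)+\beta\alpha(\lambda_2^2-2)]$, which is precisely statement~(1). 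Statement~(2) follows the same way from part~(2) of Theorem~\ref{catalanlucas} with $r=1$.

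Alternatively, if a self-contained derivation is preferred, I would argue directly from the closed form formula \eqref{binetoctk-lucas}: write $m\mathbb{O}_{k,n\pm1}=\alpha\lambda_1^{n\pm1}+\beta\lambda_2^{n\pm1}$ and $m\mathbb{O}_{k,n}=\alpha\lambda_1^{n}+\beta\lambda_2^{n}$, expand the product $m\mathbb{O}_{k,n+1}m\mathbb{O}_{k,n-1}-m\mathbb{O}_{k,n}^2$, and observe that the pure $\alpha\alpha$ and $\beta\beta$ terms cancel. The surviving mixed terms are $\alpha\beta(\lambda_1^{n+1}\lambda_2^{n-1}-\lambda_1^n\lambda_2^n)+\beta\alpha(\lambda_1^{n-1}\lambda_2^{n+1}-\lambda_1^n\lambda_2^n)$; factoring out $(\lambda_1\lambda_2)^{n-1}=2^{n-1}$ and using $\lambda_1\lambda_2=2$ turns $\lambda_1^{n+1}\lambda_2^{n-1}$ into $2^{n-1}\lambda_1^2$ and $\lambda_1^n\lambda_2^n$ into $2^{n-1}\cdot 2$, giving the claimed expression. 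Swapping the order of the two factors yields~(2), with $\lambda_1^2$ and $\lambda_2^2$ interchanged because $\mathbb{O}$ is non-commutative and $\alpha\beta\ne\beta\alpha$.

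There is essentially no obstacle here: the only point requiring any care is keeping the non-commuting coefficients $\alpha$ and $\beta$ in their correct left/right positions throughout the expansion, so that the $\alpha\beta$ versus $\beta\alpha$ bookkeeping in parts~(1) and~(2) comes out right. I would therefore present the short proof first — \emph{``Taking $r=1$ in the Catalan identity of Theorem~\ref{catalanlucas} yields the Cassini identity''} — and regard the direct computation above merely as a sanity check.
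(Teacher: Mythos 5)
Your proposal is correct and matches the paper's own proof, which likewise obtains the Cassini identity by substituting $r=1$ into the Catalan identity of Theorem~\ref{catalanlucas}. The additional direct expansion via the closed form \eqref{binetoctk-lucas} is a fine sanity check but not needed.
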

	\begin{proof}
		The results can be established by substituting $r=1$ in the Catalan's identity given in the Theorem \ref{catalanlucas}.
	\end{proof}
	\begin{theorem}[d'Ocagne's Identity]\label{d'Ocagnelucas}
		Let $n, r$ be any nonnegative integers, then d'Ocagne's identity for $k$-Mersenne-Lucas octonions is given by
		$$m\mathbb{O}_{k,r}m\mathbb{O}_{k,n+1}-m\mathbb{O}_{k,r+1}m\mathbb{O}_{k,n}= (\sqrt{9k^2-8})(\beta\alpha\lambda_{1}^n\lambda_{2}^r-\alpha\beta\lambda_{1}^r\lambda_{2}^n).$$
	\end{theorem}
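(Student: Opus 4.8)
The plan is to imitate verbatim the proof of the d'Ocagne identity for $k$-Mersenne octonions (Theorem \ref{d'Ocagne}), but with the closed form \eqref{binetoct} replaced by the $k$-Mersenne-Lucas closed form \eqref{binetoctk-lucas}, namely $m\mathbb{O}_{k,n}=\alpha\lambda_{1}^{n}+\beta\lambda_{2}^{n}$. First I would substitute this expression into each of the four octonion factors in $m\mathbb{O}_{k,r}m\mathbb{O}_{k,n+1}-m\mathbb{O}_{k,r+1}m\mathbb{O}_{k,n}$ and expand both products. Since $\lambda_{1},\lambda_{2}$ are real scalars, they commute past $\alpha$ and $\beta$, so each product breaks into four terms of the shape $\alpha^2\lambda_{1}^{\,*}$, $\alpha\beta\lambda_{1}^{\,*}\lambda_{2}^{\,*}$, $\beta\alpha\lambda_{1}^{\,*}\lambda_{2}^{\,*}$, $\beta^2\lambda_{2}^{\,*}$; the one thing to be careful about is that $\mathbb{O}$ is non-commutative, so $\alpha\beta$ and $\beta\alpha$ must be kept separate and in the order in which they appear.

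Next I would collect terms across the subtraction. The two ``diagonal'' pieces --- the $\alpha^2$ term with exponent $\lambda_{1}^{\,n+r+1}$ and the $\beta^2$ term with exponent $\lambda_{2}^{\,n+r+1}$ --- occur with opposite signs in the two products and cancel identically. The surviving mixed terms combine as $\alpha\beta\lambda_{1}^{r}\lambda_{2}^{n}(\lambda_{2}-\lambda_{1})+\beta\alpha\lambda_{1}^{n}\lambda_{2}^{r}(\lambda_{1}-\lambda_{2})$. Factoring out $(\lambda_{1}-\lambda_{2})$ and using $\lambda_{1}-\lambda_{2}=\sqrt{9k^2-8}$ from the stated root properties yields
$$m\mathbb{O}_{k,r}m\mathbb{O}_{k,n+1}-m\mathbb{O}_{k,r+1}m\mathbb{O}_{k,n}=(\sqrt{9k^2-8})(\beta\alpha\lambda_{1}^{n}\lambda_{2}^{r}-\alpha\beta\lambda_{1}^{r}\lambda_{2}^{n}),$$
which is the assertion. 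In contrast with the Catalan and Cassini computations for these octonions, this identity does not even invoke the relation $\lambda_{1}\lambda_{2}=2$.

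The only real difficulty is bookkeeping: one must avoid silently commuting $\alpha$ and $\beta$ when their scalar coefficients happen to coincide, and must track the signs introduced by subtracting the second product. There is no analytic obstacle, so the proof is a short direct computation once the closed form is inserted.
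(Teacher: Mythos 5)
Your proposal is correct and coincides with the paper's own argument: both substitute the closed form $m\mathbb{O}_{k,n}=\alpha\lambda_{1}^{n}+\beta\lambda_{2}^{n}$, expand while keeping $\alpha\beta$ and $\beta\alpha$ in order, cancel the $\alpha^{2}$ and $\beta^{2}$ terms, and factor out $\lambda_{1}-\lambda_{2}=\sqrt{9k^{2}-8}$. No differences worth noting beyond your (accurate) observation that $\lambda_{1}\lambda_{2}=2$ is never needed here.
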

	\begin{proof}
		By the Binet formula \eqref{binetoct}, we have
		\begin{eqnarray}
			m\mathbb{O}_{k,r}m\mathbb{O}_{k,n+1}-m\mathbb{O}_{k,r+1}m\mathbb{O}_{k,n}=
			&=&\left(\alpha\lambda_{1}^{r}+\beta\lambda_{2}^{r}\right)\left(\alpha\lambda_{1}^{n+1}+\beta\lambda_{2}^{n+1}\right)
			-\left(\alpha\lambda_{1}^{r+1}+\beta\lambda_{2}^{r+1}\right)\left(\alpha\lambda_{1}^{n}+\beta\lambda_{2}^{n}\right)\nonumber\\
			&=&\alpha\beta\lambda_{1}^{r}\lambda_{2}^{n+1}+\beta\alpha\lambda_{1}^{n+1}\lambda_{2}^{r}-\alpha\beta\lambda_{1}^{r+1}\lambda_{2}^{n}-\beta\alpha\lambda_{1}^{n}\lambda_{2}^{r+1}\nonumber\\
			&=&\alpha\beta\lambda_{1}^r\lambda_{2}^n(\lambda_{2}-\lambda_{1})+\beta\alpha\lambda_{1}^n\lambda_{2}^r(\lambda_{1}-\lambda_{2})\nonumber\\
			&=&(\sqrt{9k^2-8})(\beta\alpha\lambda_{1}^n\lambda_{2}^r-\alpha\beta\lambda_{1}^r\lambda_{2}^n).\nonumber
		\end{eqnarray}
		This completes the proof.	
	\end{proof}
	\begin{theorem}[Vajda Identity]\label{Vajdalucas}
		Let $n,i$ $ \& j$ be any non-negative integers then we have
		\begin{eqnarray}
			m\mathbb{O}_{k,n+i}m\mathbb{O}_{k,n+j}-m\mathbb{O}_{k,n}m\mathbb{O}_{k,n+i+j}= 2^nM_{k,i}(\sqrt{9k^2-8})(\alpha\beta\lambda_{2}^j-\beta\alpha\lambda_{1}^j).\nonumber
		\end{eqnarray}		
	\end{theorem}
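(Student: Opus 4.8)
The plan is to mirror, almost verbatim, the proof of the Vajda identity for $k$-Mersenne octonions (Theorem \ref{Vajda}), but starting from the closed form formula \eqref{binetoctk-lucas} for the $k$-Mersenne-Lucas octonions rather than \eqref{binetoct}. First I would substitute $m\mathbb{O}_{k,m}=\alpha\lambda_{1}^{m}+\beta\lambda_{2}^{m}$ for each of the four octonionic factors on the left-hand side and expand the two products. Each product produces four terms: a pure $\alpha^{2}$ term, a pure $\beta^{2}$ term, and the two mixed terms carrying $\alpha\beta$ and $\beta\alpha$ respectively.

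The crucial observation is that the $\alpha^{2}$ terms from the two products are identical — both equal $\alpha^{2}\lambda_{1}^{2n+i+j}$ — and likewise the two $\beta^{2}$ terms coincide, so these cancel in the difference. What remains is
$$\alpha\beta\big(\lambda_{1}^{n+i}\lambda_{2}^{n+j}-\lambda_{1}^{n}\lambda_{2}^{n+i+j}\big)+\beta\alpha\big(\lambda_{1}^{n+j}\lambda_{2}^{n+i}-\lambda_{1}^{n+i+j}\lambda_{2}^{n}\big).$$
Next I would factor $(\lambda_{1}\lambda_{2})^{n}=2^{n}$ out of both brackets (using $\lambda_{1}\lambda_{2}=2$), then pull $\lambda_{2}^{j}$ out of the first bracket and $\lambda_{1}^{j}$ out of the second, which exposes the common factor $\lambda_{1}^{i}-\lambda_{2}^{i}$ and leaves
$$2^{n}\big(\lambda_{1}^{i}-\lambda_{2}^{i}\big)\big(\alpha\beta\lambda_{2}^{j}-\beta\alpha\lambda_{1}^{j}\big).$$
Finally, applying the $k$-Mersenne Binet formula \eqref{Binet} in the form $\lambda_{1}^{i}-\lambda_{2}^{i}=\sqrt{9k^{2}-8}\,M_{k,i}$ yields exactly the asserted identity.

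The one delicate point — which I would flag as the main obstacle — is the non-commutativity of $\mathbb{O}$: since $\alpha\beta\neq\beta\alpha$, one must never merge the $\alpha\beta$ and $\beta\alpha$ contributions and must track which of $\alpha,\beta$ stands on the left throughout every line of the expansion. This is ultimately harmless, because $\lambda_{1}$ and $\lambda_{2}$ are real scalars and hence commute freely with $\alpha$ and $\beta$; the bookkeeping reduces to preserving the left–right order of the two octonionic factors when collecting terms. Beyond $\lambda_{1}\lambda_{2}=2$ and the $k$-Mersenne Binet formula, no further identities are required, so once the cancellation of the pure $\alpha^{2}$ and $\beta^{2}$ terms is observed, the remainder is a routine computation.
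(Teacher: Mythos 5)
Your proposal is correct and follows essentially the same route as the paper's own proof: substitute the closed form $m\mathbb{O}_{k,n}=\alpha\lambda_{1}^{n}+\beta\lambda_{2}^{n}$, cancel the $\alpha^{2}$ and $\beta^{2}$ terms, factor out $(\lambda_{1}\lambda_{2})^{n}=2^{n}$ together with $\lambda_{2}^{j}$ and $\lambda_{1}^{j}$ to expose $\lambda_{1}^{i}-\lambda_{2}^{i}=\sqrt{9k^{2}-8}\,M_{k,i}$, all while preserving the order of $\alpha\beta$ versus $\beta\alpha$. The only difference is cosmetic: you correctly cite the $k$-Mersenne-Lucas closed form \eqref{binetoctk-lucas}, whereas the paper's proof mistakenly references \eqref{binetoct}.
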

	\begin{proof}
		By the Binet formula \eqref{binetoct}, we have
		\begin{eqnarray}
			m\mathbb{O}_{k,n+i}m\mathbb{O}_{k,n+j}-m\mathbb{O}_{k,n}m\mathbb{O}_{k,n+i+j}
			&=&\left(\alpha\lambda_{1}^{n+i}+\beta\lambda_{2}^{n+i}\right)\left(\alpha\lambda_{1}^{n+j}+\beta\lambda_{2}^{n+j}\right)\nonumber\\
			&-&\left(\alpha\lambda_{1}^{n}+\beta\lambda_{2}^{n}\right)\left(\alpha\lambda_{1}^{n+i+j}+\beta\lambda_{2}^{n+i+j}\right)\nonumber\\
			&=&\alpha\beta\lambda_{1}^{n+i}\lambda_{2}^{n+j}+\beta\alpha\lambda_{1}^{n+j}\lambda_{2}^{n+i}-\alpha\beta\lambda_{1}^{n}\lambda_{2}^{n+i+j}-\beta\alpha\lambda_{1}^{n+i+j}\lambda_{2}^{n}\nonumber\\
			&=&(\lambda_{1}\lambda_{2})^n[\alpha\beta\lambda_{2}^j(\lambda_{1}^i-\lambda_{2}^i)+\beta\alpha\lambda_{1}^j(\lambda_{2}^i-\lambda_{1}^i)]\nonumber\\
			&=&2^nM_{k,i}(\sqrt{9k^2-8})(\alpha\beta\lambda_{2}^j-\beta\alpha\lambda_{1}^j).\nonumber
		\end{eqnarray}
		As required.		
	\end{proof}
	\begin{theorem}
		The ordinary and exponential generating function for the $k$-Mersenne-Lucas octonions are given, respectively, as
		\begin{enumerate}
			\item $ \sum_{n=0}^{\infty}m\mathbb{O}_{k,n}x^n=\dfrac{m\mathbb{O}_{k,0}+x\left(m\mathbb{O}_{k,1}-3km\mathbb{O}_{k,0}\right)}{ 1-3kx+2x^2}, $
			\item $ \sum_{n=0}^{\infty}\dfrac{m\mathbb{O}_{k,n}x^n}{n!}= \alpha e^{\lambda_{1}x}+\beta e^{\lambda_{2}x}. $
		\end{enumerate}
	\end{theorem}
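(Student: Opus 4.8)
The plan is to mimic the two-part generating-function argument already used for the $k$-Mersenne octonions, substituting everywhere the closed-form formula \eqref{binetoctk-lucas}, namely $m\mathbb{O}_{k,n}=\alpha\lambda_{1}^{n}+\beta\lambda_{2}^{n}$. Since the coefficients $\lambda_{1}^{n}$, $\lambda_{2}^{n}$, $x^{n}$ and $n!$ are all real scalars, they commute with the octonions $\alpha,\beta$, so the non-commutativity of $\mathbb{O}$ plays no role here and the manipulations are purely formal.

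For part (1), I would write $\sum_{n=0}^{\infty}m\mathbb{O}_{k,n}x^{n}=\alpha\sum_{n=0}^{\infty}(\lambda_{1}x)^{n}+\beta\sum_{n=0}^{\infty}(\lambda_{2}x)^{n}=\dfrac{\alpha}{1-\lambda_{1}x}+\dfrac{\beta}{1-\lambda_{2}x}$. Placing this over the common denominator $(1-\lambda_{1}x)(1-\lambda_{2}x)=1-(\lambda_{1}+\lambda_{2})x+\lambda_{1}\lambda_{2}x^{2}=1-3kx+2x^{2}$ gives numerator $(\alpha+\beta)+x(\beta\lambda_{1}+\alpha\lambda_{2})$. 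Using $\alpha+\beta=m\mathbb{O}_{k,0}$, $\alpha\lambda_{1}+\beta\lambda_{2}=m\mathbb{O}_{k,1}$ and $\beta\lambda_{1}+\alpha\lambda_{2}=(\lambda_{1}+\lambda_{2})(\alpha+\beta)-(\alpha\lambda_{1}+\beta\lambda_{2})=3k\,m\mathbb{O}_{k,0}-m\mathbb{O}_{k,1}$, I rewrite the numerator as $m\mathbb{O}_{k,0}+x\big(m\mathbb{O}_{k,1}-3k\,m\mathbb{O}_{k,0}\big)$, which is exactly the claimed form. (Alternatively, one may avoid the Binet formula entirely by multiplying $gm\mathbb{O}(x)$ by $1-3kx+2x^{2}$ and using the recurrence \eqref{k-lucasoctrecursive} to cancel all terms of degree $\ge 2$.)

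For part (2), the same substitution yields $\sum_{n=0}^{\infty}\dfrac{m\mathbb{O}_{k,n}x^{n}}{n!}=\alpha\sum_{n=0}^{\infty}\dfrac{(\lambda_{1}x)^{n}}{n!}+\beta\sum_{n=0}^{\infty}\dfrac{(\lambda_{2}x)^{n}}{n!}=\alpha e^{\lambda_{1}x}+\beta e^{\lambda_{2}x}$, using the standard exponential series.

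There is essentially no serious obstacle; the only point requiring a little care is the algebraic rearrangement of the numerator in part (1), i.e.\ correctly expressing $\beta\lambda_{1}+\alpha\lambda_{2}$ in terms of $m\mathbb{O}_{k,0}$ and $m\mathbb{O}_{k,1}$ via the relations $\lambda_{1}+\lambda_{2}=3k$ and $\lambda_{1}\lambda_{2}=2$. Since this mirrors the $k$-Mersenne case verbatim, the write-up can reasonably present part (1) in full and remark that part (2) is immediate.
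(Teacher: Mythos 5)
Your proof is correct and follows the paper's own argument essentially verbatim: substitute the closed form $m\mathbb{O}_{k,n}=\alpha\lambda_{1}^{n}+\beta\lambda_{2}^{n}$, sum the geometric (resp.\ exponential) series, and combine over the common denominator $(1-\lambda_{1}x)(1-\lambda_{2}x)=1-3kx+2x^{2}$. The only blemish is a sign slip where you state the combined numerator as $(\alpha+\beta)+x(\beta\lambda_{1}+\alpha\lambda_{2})$ rather than $(\alpha+\beta)-x(\beta\lambda_{1}+\alpha\lambda_{2})$; your subsequent identity $\beta\lambda_{1}+\alpha\lambda_{2}=3k\,m\mathbb{O}_{k,0}-m\mathbb{O}_{k,1}$ and the final expression are consistent with the correct minus sign, so this is merely a typo.
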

	\begin{proof}
		Consider the $k$-Mersenne-Lucas octonion sequence $\{ m\mathbb{O}_{k,n}\}_{n=0}^\infty$ then the ordinary generating function for this sequence is
		\begin{eqnarray}
			gm\mathbb{O}(x)=\sum_{n=0}^{\infty}m\mathbb{O}_{k,n}x^n.\nonumber
		\end{eqnarray}
		Now using the closed form formula \eqref{binetoct}, we obtain
		\begin{eqnarray}
			\sum_{n=0}^{\infty}m\mathbb{O}_{k,n}x^n
			&=&\sum_{n=0}^{\infty}\left(\alpha\lambda_{1}^{n}+\beta\lambda_{2}^n\right)x^n\nonumber\\
			&=&\alpha\sum_{n=0}^{\infty}(\lambda_{1}x)^n+\beta\sum_{n=0}^{\infty}(\lambda_{2}x)^n\nonumber\\
			&=&\alpha\left(\dfrac{1}{1-\lambda_{1}x}\right)+\beta\left(\dfrac{1}{1-\lambda_{2}x}\right) \nonumber\\
			&=&\dfrac{\left(\alpha+\beta\right)-x\left(\beta\lambda_{1}+\alpha\lambda_{2}\right)}{ 1-3kx+2x^2} \nonumber\\
			&=&\dfrac{m\mathbb{O}_{k,0}+x\left(m\mathbb{O}_{k,1}-3km\mathbb{O}_{k,0}\right)}{ 1-3kx+2x^2}. \nonumber
		\end{eqnarray}
		As required.
		\\Proof of (\textit{2}) is same as of (\textit{1}), so we omit it. 
	\end{proof}
	\begin{theorem}
		 For $k \ne 1$, the finite sum formula for $k$-Mersenne-Lucas octonions is given by
			\begin{eqnarray}
				\sum_{j=0}^{n}m\mathbb{O}_{k,j}=\dfrac{2m\mathbb{O}_{k,n}-m\mathbb{O}_{k,n+1}+m\mathbb{O}_{k,1}+m\mathbb{O}_{k,0}(1-3k)}{3(1-k)}.
				\nonumber
			\end{eqnarray}
	\end{theorem}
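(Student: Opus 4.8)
The plan is to imitate, almost verbatim, the proof of the corresponding finite sum formula for $k$-Mersenne octonions, using the closed form formula \eqref{binetoctk-lucas} in place of \eqref{binetoct}. First I would substitute $m\mathbb{O}_{k,j}=\alpha\lambda_{1}^{j}+\beta\lambda_{2}^{j}$ and split the sum by linearity; since $\lambda_{1},\lambda_{2}$ are real scalars they commute with every octonion, so no ordering subtleties arise and one simply gets $\alpha\sum_{j=0}^{n}\lambda_{1}^{j}+\beta\sum_{j=0}^{n}\lambda_{2}^{j}$.

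Next I would evaluate each geometric sum as $\dfrac{\lambda_{i}^{n+1}-1}{\lambda_{i}-1}$, which is legitimate precisely because $\lambda=1$ is a root of $\lambda^{2}-3k\lambda+2=0$ exactly when $k=1$; this is where the hypothesis $k\neq1$ enters. Then I would bring the two resulting fractions over the common denominator $(\lambda_{1}-1)(\lambda_{2}-1)=\lambda_{1}\lambda_{2}-(\lambda_{1}+\lambda_{2})+1=2-3k+1=3(1-k)$, using $\lambda_{1}\lambda_{2}=2$ and $\lambda_{1}+\lambda_{2}=3k$. Expanding the numerator and repeatedly applying $\lambda_{1}\lambda_{2}=2$ collapses the products $\lambda_{1}^{n+1}\lambda_{2}$ and $\lambda_{2}^{n+1}\lambda_{1}$ to $2\lambda_{1}^{n}$ and $2\lambda_{2}^{n}$, leaving the numerator in the shape $2(\alpha\lambda_{1}^{n}+\beta\lambda_{2}^{n})-(\alpha\lambda_{1}^{n+1}+\beta\lambda_{2}^{n+1})-(\alpha\lambda_{2}+\beta\lambda_{1})+(\alpha+\beta)$.

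The only step that is not a pure formality is recognizing each of these four pieces as an octonion. The first three are $2m\mathbb{O}_{k,n}$, $m\mathbb{O}_{k,n+1}$, and (via $\alpha+\beta=m\mathbb{O}_{k,0}$) $m\mathbb{O}_{k,0}$; the cross term $\alpha\lambda_{2}+\beta\lambda_{1}$ needs a small trick: since $(\lambda_{1}+\lambda_{2})(\alpha+\beta)=\alpha\lambda_{1}+\beta\lambda_{2}+\alpha\lambda_{2}+\beta\lambda_{1}$, we get $\alpha\lambda_{2}+\beta\lambda_{1}=3k\,m\mathbb{O}_{k,0}-m\mathbb{O}_{k,1}$. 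Substituting all of these in and dividing by $3(1-k)$ yields the claimed identity.

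The main (and essentially only) obstacle is bookkeeping: keeping the $\alpha\beta$ versus $\beta\alpha$ nuisance out of the picture — and here it genuinely does not appear, since no product of two octonions is ever formed, only scalar multiples — together with the correct handling of the cross term $\alpha\lambda_{2}+\beta\lambda_{1}$ and the use of $k\neq1$ both to sum the geometric series and to keep the denominator nonzero. An alternative, purely inductive proof using the recurrence \eqref{k-lucasoctrecursive} is also available, but the Binet route above is shorter and parallels the rest of the section.
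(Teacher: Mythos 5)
Your proposal is correct and follows essentially the same route as the paper: apply the closed form $m\mathbb{O}_{k,j}=\alpha\lambda_{1}^{j}+\beta\lambda_{2}^{j}$, sum the two geometric series, combine over $(\lambda_{1}-1)(\lambda_{2}-1)=3(1-k)$, and rewrite the numerator using $\lambda_{1}\lambda_{2}=2$, $\alpha+\beta=m\mathbb{O}_{k,0}$ and $\alpha\lambda_{2}+\beta\lambda_{1}=3k\,m\mathbb{O}_{k,0}-m\mathbb{O}_{k,1}$, exactly as in the paper's displayed computation. Your handling of the cross term and of the hypothesis $k\neq 1$ matches the paper's argument, so no further changes are needed.
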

	\begin{proof}
		Using the Binet formula for $k$-Mersenne-Lucas octonions, we write
		\begin{eqnarray}
			\sum_{j=0}^{n}m\mathbb{O}_{k,j}
			&=&\sum_{j=0}^{n}\left(\alpha\lambda_{1}^{j}+\beta\lambda_{2}^j\right)\nonumber\\
			&=&\alpha\sum_{j=0}^{n}\lambda_{1}^{j}+\beta\sum_{j=0}^{n}\lambda_{2}^j\nonumber\\
			&=&\alpha\left(\dfrac{\lambda_{1}^{n+1}-1}{\lambda_{1}-1}\right)+\beta\left(\dfrac{\lambda_{2}^{n+1}-1}{\lambda_{2}-1}\right)\nonumber\\
			&=&\dfrac{\lambda_{1}\lambda_{2}(\alpha\lambda_{1}^n+\beta\lambda_{2}^n)-(\alpha\lambda_{1}^{n+1}+\beta\lambda_{2}^{n+1})-(\alpha\lambda_{2}+\beta\lambda_{1})+(\alpha+\beta)}{\lambda_{1}\lambda_{2}-(\lambda_{1}+\lambda_{2})+1}\nonumber\\
			&=&\dfrac{2m\mathbb{O}_{k,n}-m\mathbb{O}_{k,n+1}+m\mathbb{O}_{k,1}+m\mathbb{O}_{k,0}(1-3k)}{3(1-k)}.\nonumber
		\end{eqnarray} As required.
	\end{proof}
\section{Mersenne and Mersenne-Lucas Octonions}
	We should note that for $k=1$ in the $k$-Mersenne and $k$-Mersenne-Lucas octonions, we get Mersenne and Mersenne-Lucas octonions.	
	In this section, we present the above properties for $k=1$ i.e the Mersenne and Mersenne-Lucas octonions some of which are listed in \cite{malini2021mer}.
	\begin{theorem}
		For $n\geq 0$, norms for the the $n^{th} $ Mersenne and Mersenne-Lucas octonion are given, respectively, by
		\begin{enumerate}
			\item  $N(M\mathbb{O}_n)=\sqrt{21845.2^{2n}-510.2^n+8}.$
			\item $N(m\mathbb{O}_n)=\sqrt{21845.2^{2n}+510.2^n+8}.$
		\end{enumerate}
	\end{theorem}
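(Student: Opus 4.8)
\emph{Proof proposal.} The plan is to obtain both norms by specializing, at $k=1$, the two norm theorems already proved for the $k$-Mersenne octonions and the $k$-Mersenne-Lucas octonions. First I would record the values of the defining roots at $k=1$: from $\lambda_{1}=\tfrac{3k+\sqrt{9k^2-8}}{2}$ and $\lambda_{2}=\tfrac{3k-\sqrt{9k^2-8}}{2}$ one gets $\sqrt{9k^2-8}=1$, hence $\lambda_{1}=2$, $\lambda_{2}=1$, and the denominator $9k^2-8=1$.

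Next I would evaluate the two finite sums that appear in those formulas. With $\lambda_{1}=2$ we have $1+\lambda_{1}^{2}+\cdots+\lambda_{1}^{14}=\sum_{j=0}^{7}4^{j}=\tfrac{4^{8}-1}{3}=21845$ and $\lambda_{1}^{2n}=2^{2n}$; with $\lambda_{2}=1$ the sum $1+\lambda_{2}^{2}+\cdots+\lambda_{2}^{14}$ is just eight copies of $1$, i.e. $8$, and $\lambda_{2}^{2n}=1$. Also $255\cdot 2^{n+1}=510\cdot 2^{n}$. Substituting these into the $k$-Mersenne norm formula
$N(M\mathbb{O}_{k,n})^{2}=\dfrac{\lambda_{1}^{2n}(1+\lambda_{1}^{2}+\cdots+\lambda_{1}^{14})+\lambda_{2}^{2n}(1+\lambda_{2}^{2}+\cdots+\lambda_{2}^{14})-255\cdot 2^{n+1}}{9k^2-8}$
gives $N(M\mathbb{O}_{n})^{2}=21845\cdot 2^{2n}-510\cdot 2^{n}+8$, and taking the positive square root yields part (1). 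Part (2) is identical except that the $k$-Mersenne-Lucas norm formula carries $+255\cdot 2^{n+1}$ in place of $-255\cdot 2^{n+1}$ and has no denominator, so one gets $N(m\mathbb{O}_{n})^{2}=21845\cdot 2^{2n}+510\cdot 2^{n}+8$.

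There is essentially no real obstacle here; the only point deserving a moment's care is the bookkeeping in the two sums — namely that the exponents $0,2,4,\dots,14$ contribute exactly eight terms, and the arithmetic $\tfrac{4^{8}-1}{3}=\tfrac{65535}{3}=21845$. As an alternative, fully self-contained route, one could instead use the $k=1$ Binet formulas $M_{n}=2^{n}-1$ and $m_{n}=2^{n}+1$ directly in $N(M\mathbb{O}_{n})^{2}=\sum_{r=0}^{7}M_{n+r}^{2}$ and $N(m\mathbb{O}_{n})^{2}=\sum_{r=0}^{7}m_{n+r}^{2}$, expand each square, and sum the geometric series $\sum_{r=0}^{7}4^{r}=21845$ and $\sum_{r=0}^{7}2^{r}=255$; this reproduces the same two expressions and hence the stated norms.
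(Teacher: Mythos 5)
Your proposal is correct and follows exactly the route the paper intends: the theorem appears in the section of $k=1$ specializations with no separate proof, and setting $k=1$ (so $\lambda_1=2$, $\lambda_2=1$, $9k^2-8=1$) in the two general norm theorems, with $\sum_{j=0}^{7}4^{j}=21845$ and $255\cdot2^{n+1}=510\cdot2^{n}$, gives both formulas. Your arithmetic checks out, and the alternative direct computation from $M_n=2^n-1$, $m_n=2^n+1$ is a valid (equivalent) verification.
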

	\begin{theorem}
		The Binet formulae of the Mersenne and Mersenne-Lucas octonions are  given, respectively by
		\begin{enumerate}
			\item $ M\mathbb{O}_{n}=\alpha2^{n}-\beta. $
			\item $ m\mathbb{O}_{n}=\alpha2^{n}+\beta. $
		\end{enumerate}
	\end{theorem}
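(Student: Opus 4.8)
The plan is to derive both formulae directly as the $k=1$ case of the Binet-type closed forms already established, i.e.\ equation \eqref{binetoct} for the $k$-Mersenne octonions and equation \eqref{binetoctk-lucas} for the $k$-Mersenne-Lucas octonions. The first step is to record what the characteristic data become when $k=1$: the equation $\lambda^2-3k\lambda+2=0$ becomes $\lambda^2-3\lambda+2=(\lambda-1)(\lambda-2)=0$, so $\lambda_1=2$ and $\lambda_2=1$, and hence $\sqrt{9k^2-8}=\sqrt{1}=1$. Consequently the octonions $\alpha=\sum_{r=0}^{7}\lambda_1^r e_r$ and $\beta=\sum_{r=0}^{7}\lambda_2^r e_r$ specialize to $\alpha=\sum_{r=0}^{7}2^r e_r$ and $\beta=\sum_{r=0}^{7}e_r$, although the statement only needs them kept symbolic.

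Given this, part (1) follows by substituting $\lambda_1=2$, $\lambda_2=1$ and $\sqrt{9k^2-8}=1$ into \eqref{binetoct}, which gives $M\mathbb{O}_n=\dfrac{\alpha 2^n-\beta\,1^n}{1}=\alpha 2^n-\beta$; part (2) follows by the same substitution in \eqref{binetoctk-lucas}, giving $m\mathbb{O}_n=\alpha 2^n+\beta\,1^n=\alpha 2^n+\beta$. That is essentially the entire argument.

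As an independent check worth including, one can also prove each identity by induction on $n$ using the recurrences \eqref{octrecursive} and \eqref{k-lucasoctrecursive} specialized to $k=1$. The base cases use $M_r=2^r-1$ and $m_r=2^r+1$: one computes $M\mathbb{O}_0=\sum_{r=0}^{7}M_r e_r=\alpha-\beta$, $M\mathbb{O}_1=\sum_{r=0}^{7}M_{r+1}e_r=2\alpha-\beta$, and likewise $m\mathbb{O}_0=\alpha+\beta$, $m\mathbb{O}_1=2\alpha+\beta$, which agree with the claimed formulae at $n=0,1$. The inductive step is immediate, since $\alpha 2^{n+1}-\beta=3(\alpha 2^n-\beta)-2(\alpha 2^{n-1}-\beta)$ is nothing but the scalar identities $2^{n+1}=3\cdot 2^n-2\cdot 2^{n-1}$ and $1=3\cdot 1-2\cdot 1$ applied to the $\alpha$- and $\beta$-components respectively, and similarly for $\alpha 2^n+\beta$.

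There is no real obstacle here: the result is a clean degenerate instance of the general theory. The only point deserving a word of care is that the denominator $\sqrt{9k^2-8}$ in \eqref{binetoct} does not vanish at $k=1$ — it equals $1$ — so the division there remains legitimate and both closed forms stay valid in this case; this is also why the $k\ne 1$ restriction appearing in the finite-sum theorems is absent here.
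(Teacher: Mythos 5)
Your argument is correct and matches the paper's own (implicit) reasoning: the paper obtains this theorem simply by setting $k=1$ in the general closed forms \eqref{binetoct} and \eqref{binetoctk-lucas}, exactly as you do with $\lambda_{1}=2$, $\lambda_{2}=1$, $\sqrt{9k^{2}-8}=1$. Your additional induction check is harmless extra confirmation but not needed.
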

	\begin{theorem}[Catalan's Identities]
			For $n,r\in \mathbb{N}$ such that $n \geq r$, we have
			\begin{enumerate}
				\item
				$M\mathbb{O}_{n+r}M\mathbb{O}_{n-r}-M\mathbb{O}_{n}^2= 2^{n}[\alpha\beta(1-2^r)+\beta\alpha(1-2^{-r})]. $
				\item 
				$M\mathbb{O}_{n-r}M\mathbb{O}_{n+r}-M\mathbb{O}_{n}^2=
				2^{n}[\alpha\beta(1-2^{-r})+\beta\alpha(1-2^r)]. $
				\item
				$m\mathbb{O}_{n+r}m\mathbb{O}_{n-r}-m\mathbb{O}_{n}^2= 2^{n}[\alpha\beta(2^{r}-1)+\beta\alpha(2^{-r}-1)]. $
				\item 
				$m\mathbb{O}_{n-r}m\mathbb{O}_{n+r}-m\mathbb{O}_{n}^2=
				2^{n}[\alpha\beta(2^{-r}-1)+\beta\alpha(2^{r}-1)]. $
			\end{enumerate}
	\end{theorem}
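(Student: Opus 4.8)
The plan is to specialize the Catalan identities already established for the $k$-Mersenne and $k$-Mersenne-Lucas octonions (Theorems~\ref{catalan} and~\ref{catalanlucas}) to the case $k=1$, equivalently to repeat the same Binet-formula computation with the degenerate roots. When $k=1$ the characteristic equation $\lambda^2-3\lambda+2=0$ factors as $(\lambda-1)(\lambda-2)=0$, so $\lambda_1=2$, $\lambda_2=1$, and $\sqrt{9k^2-8}=1$. Hence the closed forms collapse to $M\mathbb{O}_n=\alpha 2^n-\beta$ and $m\mathbb{O}_n=\alpha 2^n+\beta$ as in the preceding theorem, with $\alpha=\sum_{r=0}^{7}2^{r}e_r$ and $\beta=\sum_{r=0}^{7}e_r$.

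For part~(1) I would substitute $\lambda_1=2$, $\lambda_2=1$, and $9k^2-8=1$ into the first identity of Theorem~\ref{catalan}, so that $\lambda_1^{2r}=2^{2r}$ and $\lambda_2^{2r}=1$, obtaining
\[
M\mathbb{O}_{n+r}M\mathbb{O}_{n-r}-M\mathbb{O}_{n}^2 = 2^{n-r}\bigl[\alpha\beta(2^{r}-2^{2r})+\beta\alpha(2^{r}-1)\bigr],
\]
and then factoring $2^{r}$ out of the bracket to reach $2^{n}[\alpha\beta(1-2^{r})+\beta\alpha(1-2^{-r})]$. Part~(2) follows in exactly the same way from the second identity of Theorem~\ref{catalan}, while parts~(3) and~(4) come from the identical substitution into the two identities of Theorem~\ref{catalanlucas}, where the prefactor is $1$ instead of $1/(9k^2-8)$, which is again $1$. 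Alternatively, one may avoid quoting the $k$-versions and simply expand $(\alpha 2^{n+r}\mp\beta)(\alpha 2^{n-r}\mp\beta)-(\alpha 2^{n}\mp\beta)^2$ directly, collecting the mixed products $\alpha\beta$ and $\beta\alpha$.

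Since the statement is a direct specialization, there is essentially no obstacle. The only care required is bookkeeping in the non-commutative algebra $\mathbb{O}$: the two products $\alpha\beta$ and $\beta\alpha$ must be kept strictly separate throughout, with no reordering of factors and no simplification of $\alpha\beta+\beta\alpha$, and one must attach the correct root ($\lambda_1\mapsto 2$, $\lambda_2\mapsto 1$) to the correct occurrence of $\alpha$ versus $\beta$ in each of the four sign patterns. Once the substitution is carried out consistently, the four stated formulas drop out after a single factorization of a power of $2$.
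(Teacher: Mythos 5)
Your proposal is correct and matches the paper's intent exactly: the paper offers no separate proof in this section, presenting these identities simply as the $k=1$ specialization of Theorems~\ref{catalan} and~\ref{catalanlucas}, which is precisely the substitution $\lambda_1=2$, $\lambda_2=1$, $9k^2-8=1$ followed by factoring out $2^r$ that you carry out. Your verification of part~(1) and the noted care with the non-commuting products $\alpha\beta$ and $\beta\alpha$ are exactly the bookkeeping required, so nothing further is needed.
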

	\begin{theorem}[Cassini's Identities]
		For $n\in \mathbb{N}$, we have
		\begin{enumerate}
			\item $ M\mathbb{O}_{n+1}M\mathbb{O}_{n-1}-M\mathbb{O}_{n}^2=2^{n-1}(\beta\alpha-2\alpha\beta). $
			\item $ M\mathbb{O}_{n-1}M\mathbb{O}_{n+1}-M\mathbb{O}_{n}^2=2^{n-1}(\alpha\beta-2\beta\alpha). $
			\item 
			$m\mathbb{O}_{n+1}m\mathbb{O}_{n-1}-m\mathbb{O}_{n}^2=
			2^{n}[2\alpha\beta-\beta\alpha]. $
			\item 
			$m\mathbb{O}_{n-1}m\mathbb{O}_{n+1}-m\mathbb{O}_{n}^2=
			2^{n}[2\beta\alpha-\alpha\beta].$
		\end{enumerate}
	\end{theorem}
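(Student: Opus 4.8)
The plan is to derive all four identities by specializing the $k$-Mersenne and $k$-Mersenne-Lucas results already established in Sections 2 and 3 to the case $k=1$. When $k=1$ the characteristic equation $\lambda^2-3\lambda+2=0$ has roots $\lambda_1=2$ and $\lambda_2=1$, so $\lambda_1+\lambda_2=3$, $\lambda_1\lambda_2=2$, $\lambda_1^2=4$, $\lambda_2^2=1$, and $\sqrt{9k^2-8}=1$; the octonions $\alpha=\sum_{r=0}^7 2^r e_r$ and $\beta=\sum_{r=0}^7 e_r$ keep the meaning fixed in Section 2. In particular the closed-form formula \eqref{binetoct} collapses to $M\mathbb{O}_n=\alpha 2^n-\beta$ and \eqref{binetoctk-lucas} to $m\mathbb{O}_n=\alpha 2^n+\beta$, i.e. the Binet theorem stated just above, and the norm theorem follows by putting $\lambda_1=2,\lambda_2=1$ in the $k$-Mersenne (resp. $k$-Mersenne-Lucas) norm formulas, since $1+4+\cdots+4^7=\tfrac{4^8-1}{3}=21845$ and $255\cdot 2=510$.

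For parts (1) and (2) I would take the $k$-Mersenne Cassini identity (the $r=1$ instance of Theorem \ref{catalan}) and substitute $\lambda_1^2=4$, $\lambda_2^2=1$, $9k^2-8=1$: the right-hand side $\dfrac{2^{n-1}[\alpha\beta(2-\lambda_1^2)+\beta\alpha(2-\lambda_2^2)]}{9k^2-8}$ becomes $2^{n-1}[\alpha\beta(2-4)+\beta\alpha(2-1)]=2^{n-1}(\beta\alpha-2\alpha\beta)$, and the swapped product gives the companion identity (2) in the same way. For parts (3) and (4) I would do the analogous substitution into the $r=1$ instance of Theorem \ref{catalanlucas}, feeding $\lambda_1^2=4$, $\lambda_2^2=1$ into $2^{n-1}[\alpha\beta(\lambda_1^2-2)+\beta\alpha(\lambda_2^2-2)]$ and its mirror image.

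A cleaner, self-contained alternative is to compute directly from the two specialized Binet formulae. Expanding $M\mathbb{O}_{n+1}M\mathbb{O}_{n-1}-M\mathbb{O}_n^2=(\alpha 2^{n+1}-\beta)(\alpha 2^{n-1}-\beta)-(\alpha 2^n-\beta)^2$, the $\alpha^2 2^{2n}$ and $\beta^2$ terms cancel, leaving only cross terms $\alpha\beta(2^n-2^{n+1})+\beta\alpha(2^{n-1}-2^n)$ up to sign, which collapses to the stated multiple of $\beta\alpha-2\alpha\beta$; the other three are handled identically with $m\mathbb{O}_n=\alpha 2^n+\beta$. There is no real mathematical obstacle beyond what was already done for general $k$; the only point requiring care is that $\mathbb{O}$ is \emph{non-commutative}, so $\alpha\beta$ and $\beta\alpha$ must be kept as distinct quantities and never merged, and the powers of $2$ must be tracked exactly when $(\lambda_1\lambda_2)^n=2^n$ is pulled out and the $2^{-r}=2^{-1}$ factor absorbed — that bookkeeping of the exponent of $2$ in front is the one spot where a slip could occur, and it is worth verifying the constant in (3)–(4) against the direct computation.
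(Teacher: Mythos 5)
Your approach is exactly the one the paper intends: Section~4 states these identities as the $k=1$ specialization of Theorems~\ref{catalan} and \ref{catalanlucas} (via $r=1$), with $\lambda_1=2$, $\lambda_2=1$, $9k^2-8=1$, and the paper supplies no further proof. Your substitution and your direct check from $M\mathbb{O}_n=\alpha 2^n-\beta$, $m\mathbb{O}_n=\alpha 2^n+\beta$ are both correct, and parts (1) and (2) come out as stated.

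The caution you raise about the constant in (3)--(4) is warranted and, in fact, decisive: both routes give $m\mathbb{O}_{n+1}m\mathbb{O}_{n-1}-m\mathbb{O}_{n}^2=2^{n-1}[2\alpha\beta-\beta\alpha]$ and $m\mathbb{O}_{n-1}m\mathbb{O}_{n+1}-m\mathbb{O}_{n}^2=2^{n-1}[2\beta\alpha-\alpha\beta]$, not the factor $2^{n}$ printed in the theorem. Indeed, setting $r=1$ in the Mersenne--Lucas Catalan identity of the same section yields $2^{n}[\alpha\beta(2-1)+\beta\alpha(\tfrac12-1)]=2^{n-1}[2\alpha\beta-\beta\alpha]$, so the stated (3) and (4) are internally inconsistent with the rest of the paper and should carry $2^{n-1}$. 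Your proof, carried out as written, establishes the corrected versions; it cannot establish the literal statement because that statement is off by a factor of $2$.
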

	\begin{theorem}[d'Ocagne's Identities]
		Let $n,r$ be any nonnegative integers, then we have,
		\begin{enumerate}
			\item
			$M\mathbb{O}_{r}M\mathbb{O}_{n+1}-M\mathbb{O}_{r+1}M\mathbb{O}_{n}= \alpha\beta2^r-\beta\alpha2^n.$
			\item
			$m\mathbb{O}_{r}m\mathbb{O}_{n+1}-m\mathbb{O}_{r+1}m\mathbb{O}_{n}= \beta\alpha2^n-\alpha\beta2^r.$
		\end{enumerate}	
	\end{theorem}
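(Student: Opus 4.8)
The plan is to read both identities off as the $k=1$ specialisation of Theorems \ref{d'Ocagne} and \ref{d'Ocagnelucas}. Setting $k=1$ in the characteristic roots gives $\lambda_{1}=\frac{3+\sqrt{1}}{2}=2$ and $\lambda_{2}=\frac{3-\sqrt{1}}{2}=1$, so that $\lambda_{1}-\lambda_{2}=\sqrt{9k^2-8}=1$ and $\lambda_{1}\lambda_{2}=2$. With these values $\lambda_{1}^{m}=2^{m}$ and $\lambda_{2}^{m}=1$ for every exponent that occurs, while $\alpha=\sum_{r=0}^{7}\lambda_{1}^{r}e_r$ and $\beta=\sum_{r=0}^{7}\lambda_{2}^{r}e_r$ are precisely the vectors appearing in the $k=1$ Binet formulae $M\mathbb{O}_{n}=\alpha 2^{n}-\beta$ and $m\mathbb{O}_{n}=\alpha 2^{n}+\beta$ recorded earlier in this section.

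For part (1) I would substitute $k=1$ directly into the right-hand side of Theorem \ref{d'Ocagne}: since $\lambda_{1}^{r}\lambda_{2}^{n}=2^{r}$, $\lambda_{1}^{n}\lambda_{2}^{r}=2^{n}$ and the denominator $\sqrt{9k^2-8}$ equals $1$, the expression $\dfrac{\alpha\beta\lambda_{1}^{r}\lambda_{2}^{n}-\beta\alpha\lambda_{1}^{n}\lambda_{2}^{r}}{\sqrt{9k^2-8}}$ collapses to $\alpha\beta\,2^{r}-\beta\alpha\,2^{n}$. The same substitution in Theorem \ref{d'Ocagnelucas} gives part (2): the prefactor $\sqrt{9k^2-8}$ becomes $1$ and $\beta\alpha\lambda_{1}^{n}\lambda_{2}^{r}-\alpha\beta\lambda_{1}^{r}\lambda_{2}^{n}$ reduces to $\beta\alpha\,2^{n}-\alpha\beta\,2^{r}$. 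Equivalently, one may redo the computation from scratch using the $k=1$ Binet formulae: expanding $(\alpha 2^{r}-\beta)(\alpha 2^{n+1}-\beta)-(\alpha 2^{r+1}-\beta)(\alpha 2^{n}-\beta)$, the $\alpha^{2}$ and $\beta^{2}$ contributions cancel and the surviving mixed terms (kept in order, since $\alpha\beta\ne\beta\alpha$ in $\mathbb{O}$) give $\alpha\beta\,2^{r}-\beta\alpha\,2^{n}$; the Mersenne--Lucas case is the same with the sign on $\beta$ reversed.

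The only point requiring any attention — and it is a minor one — is the non-commutative bookkeeping of the products $\alpha\beta$ versus $\beta\alpha$, handled exactly as in the proof of Theorem \ref{d'Ocagne}; no new idea is needed beyond specialising the already-established general identities, so the argument is essentially a one-line substitution in each case.
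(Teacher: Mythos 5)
Your proposal is correct and matches the paper's approach: the paper gives no separate proof in this section, presenting these identities precisely as the $k=1$ specialisation of Theorems \ref{d'Ocagne} and \ref{d'Ocagnelucas} with $\lambda_{1}=2$, $\lambda_{2}=1$ and $\sqrt{9k^2-8}=1$. Your alternative direct expansion from the $k=1$ Binet formulae, keeping the order of $\alpha\beta$ and $\beta\alpha$, is also sound but is just the same computation specialised.
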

	\begin{theorem}[Vajda's Identity]
		Let $n,i$ $ \& j$ be any non-negative integers then we have
		\begin{enumerate}
			\item 	$ M\mathbb{O}_{k,n+i}M\mathbb{O}_{k,n+j}-M\mathbb{O}_{k,n}M\mathbb{O}_{k,n+i+j}= 2^nM_{i}[\beta\alpha2^j-\alpha\beta]. $
			\item $ m\mathbb{O}_{k,n+i}m\mathbb{O}_{k,n+j}-m\mathbb{O}_{k,n}m\mathbb{O}_{k,n+i+j}= 2^nM_{i}(\alpha\beta-\beta\alpha2^j). $
		\end{enumerate}	
	\end{theorem}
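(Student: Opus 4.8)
The plan is to obtain both identities as the specialization $k=1$ of the general Vajda identities already proved in Theorem~\ref{Vajda} and Theorem~\ref{Vajdalucas}. First I would record the values of the auxiliary quantities at $k=1$: the characteristic roots of $\lambda^{2}-3\lambda+2=0$ are $\lambda_{1}=2$ and $\lambda_{2}=1$, so $\sqrt{9k^{2}-8}=1$, $\lambda_{1}^{j}=2^{j}$, $\lambda_{2}^{j}=1$, and $M_{k,i}=M_{i}$; here $\alpha$ and $\beta$ retain their meaning with these roots, i.e. $\alpha=\sum_{r=0}^{7}2^{r}e_{r}$ and $\beta=\sum_{r=0}^{7}e_{r}$. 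Substituting into
$$M\mathbb{O}_{k,n+i}M\mathbb{O}_{k,n+j}-M\mathbb{O}_{k,n}M\mathbb{O}_{k,n+i+j}= \dfrac{2^{n}M_{k,i}[\beta\alpha\lambda_{1}^{j}-\alpha\beta\lambda_{2}^{j}]}{\sqrt{9k^{2}-8}}$$
immediately gives part (1), and the same substitution in the Mersenne--Lucas version from Theorem~\ref{Vajdalucas} gives part (2).

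Alternatively, and perhaps more transparently for the reader, I would re-derive the two formulas directly from the closed forms $M\mathbb{O}_{n}=\alpha2^{n}-\beta$ and $m\mathbb{O}_{n}=\alpha2^{n}+\beta$ established in the preceding theorem. For part (1), expand
$$M\mathbb{O}_{n+i}M\mathbb{O}_{n+j}-M\mathbb{O}_{n}M\mathbb{O}_{n+i+j}=(\alpha2^{n+i}-\beta)(\alpha2^{n+j}-\beta)-(\alpha2^{n}-\beta)(\alpha2^{n+i+j}-\beta);$$
the $\alpha\alpha$ and $\beta\beta$ contributions cancel, leaving the mixed terms $-\alpha\beta2^{n+i}-\beta\alpha2^{n+j}+\alpha\beta2^{n}+\beta\alpha2^{n+i+j}$, which regroup as $2^{n}(2^{i}-1)(\beta\alpha2^{j}-\alpha\beta)$. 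Since $M_{i}=2^{i}-1$, this is exactly $2^{n}M_{i}[\beta\alpha2^{j}-\alpha\beta]$. Part (2) is handled identically, the sign change in $m\mathbb{O}_{n}=\alpha2^{n}+\beta$ flipping the mixed terms to $2^{n}(2^{i}-1)(\alpha\beta-\beta\alpha2^{j})=2^{n}M_{i}(\alpha\beta-\beta\alpha2^{j})$.

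The only delicate point — the nearest thing to an obstacle — is the non-commutativity of $\mathbb{O}$: one must keep $\alpha\beta$ and $\beta\alpha$ strictly separate and respect the left/right order of factors when multiplying the closed forms, exactly as in the proofs of Theorems~\ref{Vajda} and~\ref{Vajdalucas}. Once that bookkeeping is done, both identities follow from the elementary factorization above. As consistency checks, note that $i=0$ forces $M_{0}=0$ and both sides vanish, and $j=0$ reduces the statements to the corresponding d'Ocagne-type differences.
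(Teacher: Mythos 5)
Your proposal is correct and matches the paper's intent: Section~4 offers no separate proof, these identities being exactly the $k=1$ specialization of Theorems~\ref{Vajda} and~\ref{Vajdalucas} with $\lambda_{1}=2$, $\lambda_{2}=1$, $\sqrt{9k^{2}-8}=1$ and $M_{k,i}=M_{i}=2^{i}-1$, which is precisely your first argument. Your direct expansion from $M\mathbb{O}_{n}=\alpha 2^{n}-\beta$ and $m\mathbb{O}_{n}=\alpha 2^{n}+\beta$, keeping $\alpha\beta$ and $\beta\alpha$ distinct, is also correct and serves as a sound independent check.
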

	\begin{theorem}
		The ordinary and exponential generating functions for the Mersenne and Mersenne-Lucas octonions are given as, respectively,
		\begin{enumerate}
			\item $ \sum_{n=0}^{\infty}M\mathbb{O}_{n}x^n=\dfrac{M\mathbb{O}_0+x\left(M\mathbb{O}_1-3M
				\mathbb{O}_0\right)}{ 1-3x+2x^2}. $
			\item $ \sum_{n=0}^{\infty}m\mathbb{O}_{n}x^n=\dfrac{m\mathbb{O}_{0}+x\left(m\mathbb{O}_{1}-3m\mathbb{O}_{0}\right)}{ 1-3x+2x^2}. $
			\item $ \sum_{n=0}^{\infty}\dfrac{M\mathbb{O}_{n}x^n}{n!}= \alpha e^{2x}-\beta e^{x}. $
			\item $ \sum_{n=0}^{\infty}\dfrac{m\mathbb{O}_{n}x^n}{n!}= \alpha e^{2x}+\beta e^{x}. $
		\end{enumerate}
	\end{theorem}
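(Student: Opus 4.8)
The plan is to obtain all four formulae as the $k=1$ specializations of the generating-function theorems already established for the $k$-Mersenne and $k$-Mersenne-Lucas octonions, supplemented by a short direct verification. Throughout, note that $k=1$ gives $\lambda^2-3\lambda+2=(\lambda-1)(\lambda-2)$, so $\lambda_{1}=2$, $\lambda_{2}=1$, and $\lambda_{1}-\lambda_{2}=\sqrt{9k^2-8}=1$; this is the only arithmetic fact needed.

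For the ordinary generating functions (parts 1 and 2), I would run the standard argument directly. Put $g(x)=\sum_{n\ge0}M\mathbb{O}_{n}x^n$ and use the recurrence $M\mathbb{O}_{n+1}=3M\mathbb{O}_{n}-2M\mathbb{O}_{n-1}$ (the $k=1$ case of \eqref{octrecursive}). Then
\[
(1-3x+2x^2)g(x)=\sum_{n\ge0}M\mathbb{O}_{n}x^n-3\sum_{n\ge0}M\mathbb{O}_{n}x^{n+1}+2\sum_{n\ge0}M\mathbb{O}_{n}x^{n+2},
\]
and after re-indexing so that all three sums collect powers $x^n$, every coefficient for $n\ge2$ vanishes by the recurrence, leaving $M\mathbb{O}_{0}+\bigl(M\mathbb{O}_{1}-3M\mathbb{O}_{0}\bigr)x$; dividing through yields part 1, and the same computation with $m\mathbb{O}$ replacing $M\mathbb{O}$ yields part 2. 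Equivalently, one may simply set $k=1$ in the previously proved ordinary generating-function theorems. Since the octonion coefficients are only multiplied by real scalars at each step, the non-commutativity of $\mathbb{O}$ plays no role.

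For the exponential generating functions (parts 3 and 4), I would substitute $k=1$ into the closed forms $\sum_{n\ge0}M\mathbb{O}_{k,n}x^n/n!=(\alpha e^{\lambda_{1}x}-\beta e^{\lambda_{2}x})/\sqrt{9k^2-8}$ and its Mersenne-Lucas analogue; with $\lambda_{1}=2$, $\lambda_{2}=1$, $\sqrt{9k^2-8}=1$ these collapse to $\alpha e^{2x}-\beta e^{x}$ and $\alpha e^{2x}+\beta e^{x}$. Alternatively, one can argue from scratch using the $k=1$ Binet form $M\mathbb{O}_{n}=\alpha 2^{n}-\beta$: term by term, $\sum_{n\ge0}(\alpha 2^{n}-\beta)x^n/n!=\alpha\sum_{n\ge0}(2x)^n/n!-\beta\sum_{n\ge0}x^n/n!=\alpha e^{2x}-\beta e^{x}$, with the identical computation for $m\mathbb{O}_{n}=\alpha 2^{n}+\beta$.

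There is no genuine obstacle: the proof is entirely routine once the value $k=1$ is inserted, and the only points meriting explicit mention are the evaluation $\lambda_{1}=2,\ \lambda_{2}=1,\ \lambda_{1}-\lambda_{2}=1$ and the observation that every manipulation involves octonions scaled by real numbers, so no associativity or commutativity issue intervenes. Hence the theorem follows immediately from the $k$-graded results of the previous sections.
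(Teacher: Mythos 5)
Your proposal is correct and matches the paper's approach: the paper states this theorem without proof, as the immediate $k=1$ specialization of the generating-function theorems of Sections 2 and 3 (using $\lambda_{1}=2$, $\lambda_{2}=1$, $\sqrt{9k^2-8}=1$), which is exactly what you do. Your optional direct verifications via the recurrence and the Binet forms $M\mathbb{O}_{n}=\alpha 2^{n}-\beta$, $m\mathbb{O}_{n}=\alpha 2^{n}+\beta$ are also sound and mirror the proofs the paper gave in the general $k$ case.
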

	\begin{theorem}
		The finite sum formulae for the Mersenne and Mersenne-Lucas octonions are given by, respectively,
		\begin{enumerate}
			\item $ \sum_{j=0}^{n}M\mathbb{O}_{j}=M\mathbb{O}_{n+1}-(\alpha+n\beta). $
			\item $ \sum_{j=0}^{n}m\mathbb{O}_{j}=m\mathbb{O}_{n+1}-(\alpha-n\beta). $
		\end{enumerate}
	\end{theorem}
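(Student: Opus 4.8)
The plan is to specialize the earlier finite-sum computation to $k=1$. Note first that the general $k$-Mersenne finite sum formula carries a factor $3(1-k)$ in the denominator and so is undefined at $k=1$; this is precisely why the present statement must be treated separately, and the computation has to be redone — but it becomes simpler. The starting point is the Binet formulae already recorded for $k=1$, namely $M\mathbb{O}_{n}=\alpha 2^{n}-\beta$ and $m\mathbb{O}_{n}=\alpha 2^{n}+\beta$ (obtained from $\lambda_{1}=2$, $\lambda_{2}=1$ when $k=1$, so that $\sqrt{9k^{2}-8}=1$).

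For part (1), I would write $\sum_{j=0}^{n}M\mathbb{O}_{j}=\alpha\sum_{j=0}^{n}2^{j}-\beta\sum_{j=0}^{n}1$, then evaluate the geometric sum $\sum_{j=0}^{n}2^{j}=2^{n+1}-1$ and the trivial sum $\sum_{j=0}^{n}1=n+1$, obtaining $\alpha(2^{n+1}-1)-(n+1)\beta$. The last step is a regrouping: rewrite this as $(\alpha 2^{n+1}-\beta)-\alpha-n\beta$ and recognize the bracket as $M\mathbb{O}_{n+1}$ via the Binet formula, which yields $M\mathbb{O}_{n+1}-(\alpha+n\beta)$. Part (2) is identical with the sign of $\beta$ reversed: $\sum_{j=0}^{n}m\mathbb{O}_{j}=\alpha(2^{n+1}-1)+(n+1)\beta=(\alpha 2^{n+1}+\beta)-\alpha+n\beta=m\mathbb{O}_{n+1}-(\alpha-n\beta)$.

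There is no real obstacle here; the only point requiring care is the final regrouping, so that exactly one copy of $\alpha$ is peeled off to reconstitute $M\mathbb{O}_{n+1}$ (resp. $m\mathbb{O}_{n+1}$) while the surviving term $-n\beta$ (resp. $+n\beta$) keeps the correct sign. I would also remark that, although $\mathbb{O}$ is non-commutative and $\alpha\beta\ne\beta\alpha$, no products of $\alpha$ and $\beta$ appear in this argument — every term is an ordinary real multiple of $\alpha$ or of $\beta$ — so non-commutativity plays no role. (Alternatively, one could prove both identities by induction on $n$ using the recurrences \eqref{octMerrecursive} and \eqref{lucasoctrecursive}, but the Binet route above is shorter.)
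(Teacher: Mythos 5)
Your proposal is correct, and it supplies something the paper itself omits: the theorems of Section 4 are stated without proof, as $k=1$ specializations, and the general finite-sum formula cannot simply be specialized because its denominator $3(1-k)$ vanishes at $k=1$ --- a point you rightly flag. Your computation, using $\lambda_{1}=2$, $\lambda_{2}=1$, the Binet forms $M\mathbb{O}_{n}=\alpha 2^{n}-\beta$ and $m\mathbb{O}_{n}=\alpha 2^{n}+\beta$, the geometric sum $\sum_{j=0}^{n}2^{j}=2^{n+1}-1$, and the regrouping $\alpha(2^{n+1}-1)\mp(n+1)\beta=(\alpha 2^{n+1}\mp\beta)-\alpha\mp n\beta$ (upper signs for the Mersenne case, lower for Mersenne--Lucas), is exactly the analogue of the Binet-formula method the paper uses for the general-$k$ sums, correctly adapted to the degenerate root $\lambda_{2}=1$ where the geometric series for $\lambda_{2}$ collapses to the term $n+1$. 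Your remark that non-commutativity is irrelevant here (no products $\alpha\beta$ or $\beta\alpha$ occur, only real scalar multiples of $\alpha$ and $\beta$) is also accurate. No gaps.
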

	\section{Conclusion}
	In our study, we have defined the octonions involving the $k$-Mersenne and $k$-Mersenne-Lucas sequence and we have obtained the closed form formulas of these octonions. Moreover, we have presented various results including norm, generating functions, Catalan's identity, Cassini's identity, d'Ocagne's identity, Vajda's identity, and the finite sum formula of these octonions. As a consequence $k=1$ yields the above properties for Mersenne and Mersenne-Lucas octonions.
	\subsection*{Acknowledgment}      
	The first and second authors acknowledge the University Grant Commission(UGC), India for providing fellowship for this research work.
	
\end{document}